\documentclass{article}
\usepackage{amssymb,amscd,graphpap}
\usepackage{comment}
\usepackage[fleqn]{amsmath}
\usepackage[left=4cm,right=4cm,top=4cm,bottom=4cm]{geometry}
\usepackage{graphicx}
\usepackage{multicol}
\usepackage{changepage}
\usepackage[utf8]{inputenc}
\usepackage{float}
\usepackage{datetime}
\usepackage{amsthm}
\usepackage[mathscr]{euscript}
 \let\mathscr\relax

\usepackage[scr]{rsfso}

\usepackage{rotating}
\usepackage{thmtools, thm-restate}
\usepackage{pgffor}
\usepackage{tabto}
\usepackage{authblk}

\usepackage{tikz}
\usetikzlibrary{automata}
\usetikzlibrary{arrows} 
\usepgflibrary{arrows} 

\definecolor{awesome}{rgb}{1.0, 0.13, 0.32}
\definecolor{iceberg}{rgb}{0.44, 0.65, 0.82}
\definecolor{forest}{rgb}{0.0, 0.27, 0.13}
\definecolor{gold}{rgb}{0.99, 0.76, 0.0}
\definecolor{teal}{rgb}{0.0, 0.5, 0.5}

\setlength{\parskip}{11pt}
\setlength{\parindent}{0pt}
 
\newtheorem{theorem}{Theorem}[section]
\newtheorem{lemma}[theorem]{Lemma}

\newtheorem{conjecture}[theorem]{Conjecture}
\newtheorem{note}[theorem]{Note}
\newtheorem{definition}{Definition}[section]

\date{January $23^{rd}$ 2021}

\title{Collatz convergence is a Hydra Game}

\author{Alexander Rahn\thanks{Nuremberg Institute of Technology, Keßlerpl. 12 90489 Nuremberg, Germany}, \hspace{1pt}   Eldar Sultanow\thanks{Potsdam University, Chair of Business Informatics, Processes and Systems, Karl-Marx Straße 67, 14482, Potsdam, Germany
\\Capgemini, Bahnhofstraße 30, 90402, Nuremberg, Germany} \hspace{1pt} and Idriss J. Aberkane\thanks{Unesco-Unitwin Complex Systems Digital Campus, Chair of Prof. Pierre Collet, Pierre Collet, ICUBE - UMR CNRS 7357, 4 rue Kirschleger, 67000 Strasbourg, France. Corresponding author: idriss.aberkane@polytechnique.edu}}

\begin{document}

\maketitle

\begin{abstract}
The Collatz dynamic is known to generate a complex quiver of sequences over natural numbers which inflation propensity remains so unpredictable it could be used to generate reliable proof of work algorithms for the cryptocurrency industry. Here we establish an \textit{ad hoc} equivalent of modular arithmetic for Collatz sequences to automatically demonstrate the convergence of infinite quivers of numbers, based on five arithmetic rules we prove apply on the entire Collatz dynamic and which we further simulate to gain insight on their graph geometry and computational properties. We then formally demonstrate these rules define an automaton that is playing a Hydra game on the graph of undecided numbers we also prove is embedded in $24\mathbb{N}-7$, proving that in ZFC the Collatz conjecture is true, before giving a promising direction to also prove it in Peano arithmetic. 
\end{abstract}

\section{Introduction}
\label{sec:introduction}
The dynamical system generated by the $3n+1$ problem is known to create complex quivers over $\mathbb{N}$, one of the most picturesque being the so-called "Collatz Feather" or "Collatz Seaweed", a name popularized by Clojure programmer Oliver Caldwell in 2017. The inflation propensity of Collatz orbits remains so unpredictable it can form the core of a reliable proof-of-work algorithm for Blockchain solutions \cite{Ref_Bocart_2018}, with groundbreaking applications to the field of number-theoretical cryptography as such algorithms are unrelated to primes yet, being based on the class of congruential graphs, still allow for a wide diversity of practical variants. If Bocart thus demonstrated that graph-theoretical approaches to the $3n+1$ problem can be very fertile to applied mathematics, the authors have also endeavored to demonstrate its pure number-theoretical interest prior to this work \cite{Ref_Aberkane_2017}, \cite{Ref_Aberkane_2020}, \cite{Ref_Koch_2020}, \cite{Ref_Sultanow_2020}.

The definitive purpose of this article however is to establish fundamental properties of the "Collatz Feather" and infer provable consequences of those properties to achieve a positive proof of the Collatz conjecture. Our methodology consists of using the complete binary tree and the complete ternary tree\footnote{The complete binary tree over odd numbers is defined as $2\mathbb{N}+1$ endowed with the following two linear applications $\{\cdot2-1;\cdot2+1\}$. The complete ternary tree over odd numbers is defined as $2\mathbb{N}+1$ endowed with operations $\{\cdot3-2;\cdot3;\cdot3+2\}$} over $2\mathbb{N}+1$ as a general coordinate system for each node of the Feather. We owe this strategy to earlier discussions with Feferman \cite{Ref_Feferman_2012} on his investigations on the continuum hypothesis, as it is known the complete binary tree over natural numbers is one way of generating real numbers. The last author's discussion with Feferman argued morphisms, sections and origamis of n-ary trees over $\mathbb{N}$ could be a promising strategy to define objects of intermediate cardinalities between $\aleph _{n}$ and $\aleph _{n+1}$, in a manner inspired from Conway's construction of the surreal numbers \cite{Ref_Knuth_1974}, which itself began by investigating the branching factor of the game of Go. The initial interest therefore, was to investigate the branching factor of the Collatz Feather and to define the cardinality of the set of its branches. Here we begin by identifying and proving five arithmetical rules that apply anywhere on the Collatz dynamic, with the purpose of demonstrating that applying them from number $1$ allows to generate the entire Feather.

\section{Five essential rules of Collatz dynamics}
\label{sec:Fundamental}

\begin{note}
For all intent and purpose we will define \textbf{Syr(x)} or the \textbf{"Syracuse action"} as \textbf{"the next odd number in the forward Collatz orbit of $x$"}. Whenever two numbers $a$ and $b$ have a common number in their orbit, we will also note $a\equiv b$, a relation that is self-evidently transitive:
\[
(a\equiv b) \land (b\equiv c) \Rightarrow a\equiv c
\]
The choice of symbol "$\equiv$" is a deliberate one to acknowledge a kinship between our method and modular arithmetic. 
\end{note}

\begin{definition}{Actions G, V and S}
For any natural number $a$ are specified as follows:
\begin{enumerate}
\item $G(a):=2a-1$
\item $S(a):=2a+1$. The \textbf{rank} of $a$ is its number of consecutive end digits $1$ in base $2$. 
\item $V(a):=4a+1 = G\circ S(a)$
\end{enumerate}
\end{definition}

\newpage
\begin{definition}{Type A, B and C}
\begin{enumerate}
    \item a number $a$ is of type A if its base 3 representation ends with digit 2
    \item a number $b$ is of type B if its base 3 representation ends with digit 0 
    \item a number $c$ is of type C if its base 3 representation ends with digit 1
\end{enumerate}
To remember which is which one need only remember the order of ABC: $a+1$ is dividable by 3, and so is $c-1$, thus A is on the left of B and C is on the right. 
\end{definition}

\begin{figure}[H]
\resizebox{1\textwidth}{!}{
\begin{tikzpicture}
\node[state, draw=purple, fill=purple!10] (1) [ultra thick, below=50pt, left=180pt ] {$1$};
\node[state, draw=gold, fill=gold!10] (3) [ultra thick, below=10pt,  right=20pt ] {$3$};
\node[state, draw=teal, fill=teal!10] (5) [ultra thick, above=30pt, left=180pt ] {$5$};
\node[state, draw=purple, fill=purple!10] (7) [ultra thick, above=30pt, right=130pt ] {$7$};
\node[state, draw=gold, fill=gold!10] (15) [ultra thick, above=80pt, right=180pt ] {$15$};
\node[state, draw=purple, fill=purple!10] (13) [ultra thick, above=80pt, right=20pt ] {$13$};
\node[state, draw=teal, fill=teal!10] (11) [ultra thick, above=80pt, left=130pt ] {$11$};
\node[state, draw=gold, fill=gold!10] (9) [ultra thick, above=80pt, left=290pt ] {$9$};
\node[state, draw=teal, fill=teal!10] (17) [ultra thick, above=130pt, left=350pt ] {$17$};
\node[state, draw=purple, fill=purple!10] (19) [ultra thick, above=130pt, left=270pt ] {$19$};
\node[state, draw=gold, fill=gold!10] (21) [ultra thick, above=130pt, left=180pt ] {$21$};
\node[state, draw=teal, fill=teal!10] (23) [ultra thick, above=130pt, left=100pt ] {$23$};
\node[state, draw=purple, fill=purple!10] (25) [ultra thick, above=130pt, left=10pt ] {$25$};
\node[state, draw=gold, fill=gold!10] (27) [ultra thick, above=130pt, right=40pt ] {$27$};
\node[state, draw=teal, fill=teal!10] (29) [ultra thick, above=130pt, right=130pt ] {$29$};
\node[state, draw=purple, fill=purple!10] (31) [ultra thick, above=130pt, right=200pt ] {$31$};
\path[every node/.style={sloped,anchor=north,auto=false}]
(5) [->] edge [line width=5pt] [draw=iceberg, bend left] node {\large \textcolor{iceberg} G} (9)
(9) [->] edge [line width=5pt] [draw=iceberg, bend left] node {\large \textcolor{iceberg} G} (17)
(1) [->] edge [line width=5pt] [draw=forest, bend left] node {\large \textcolor{forest} V} (5)
(5) [->] edge [line width=5pt] [draw=red, bend right] node {\large \textcolor{red} S} (11)
(5) [->] edge [line width=5pt] [draw=forest, bend left] node {\large \textcolor{forest} V} (21)
(9) [->] edge [line width=5pt] [draw=red, bend right] node {\large \textcolor{red} S} (19)
(11) [->] edge [line width=5pt] [draw=red, bend right] node {\large \textcolor{red} S} (23)
(13) [->] edge [line width=5pt] [draw=iceberg, bend left] node {\large \textcolor{iceberg} G} (25)
(3) [->] edge [line width=5pt] [draw=forest, bend left] node {\large \textcolor{forest} V} (13)
(1) [->] edge [line width=5pt] [draw=red, bend right] node {\large \textcolor{red} S} (3)
(3) [->] edge [line width=5pt] [draw=red, bend right] node {\large \textcolor{red} S} (7)
(7) [->] edge [line width=5pt] [draw=red, bend right] node {\large \textcolor{red} S} (15)
(15) [->] edge [line width=5pt] [draw=red, bend right] node {\large \textcolor{red} S} (31)
(5) [->] edge [line width=5pt] [draw=forest, bend left] node {\large \textcolor{forest} V} (21)
(9) [->] edge [line width=5pt] [draw=red, bend right] node {\large \textcolor{red} S} (19)
(11) [->] edge [line width=5pt] [draw=red, bend right] node {\large \textcolor{red} S} (23)
(13) [->] edge [line width=5pt] [draw=red, bend right] node {\large \textcolor{red} S} (27)
(7) [->] edge [line width=5pt] [draw=forest, bend left] node {\large \textcolor{forest} V} (29)
(3) [->] edge [line width=5pt] [draw=iceberg, bend left] node {\large \textcolor{iceberg} G} (5)
(15) [->] edge [line width=5pt] [draw=iceberg, bend left] node {\large \textcolor{iceberg} G} (29)
(7) [->] edge [line width=5pt] [draw=iceberg, bend left] node {\large \textcolor{iceberg} G} (13)
(11) [->] edge [line width=5pt] [draw=iceberg, bend left] node {\large \textcolor{iceberg} G} (21);
\end{tikzpicture}
}
\caption{Quiver connecting all odd numbers from 1 to 31 with the arrows of actions S,V and G. The set  $2\mathbb{N}+1$ is thus endowed with three unary operations without a general inverse that are non commutative with $G\circ S = V$. Whenever we will mention the inverse of these operations, it will be assuming they exist on $\mathbb{N}$. Type A numbers are circled in teal, B in gold and C in purple.}
\label{fig:1}
\end{figure}
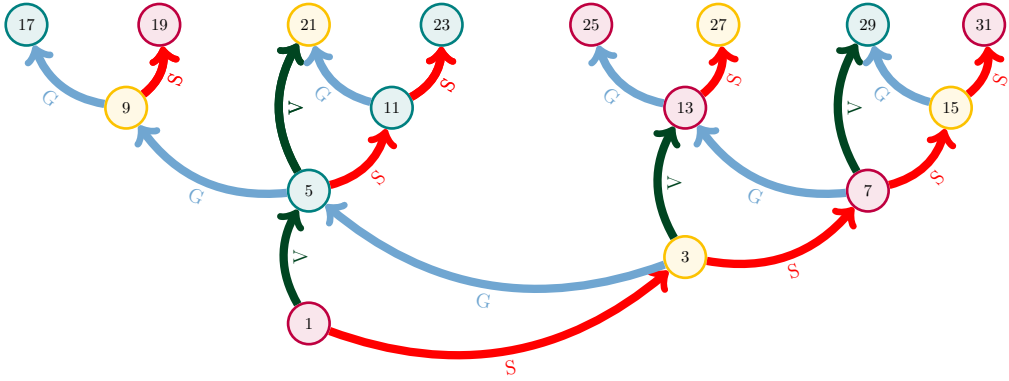

\begin{theorem}
The following arithmetic rules apply anywhere over the system $2\mathbb{N}+1$ endowed with the Collatz dynamic.
\begin{itemize}
\item \textbf{Rule One:} $\forall x$ odd, $V(x)\equiv(x)$
\item \textbf{Rule Two:} $\forall x,k$ odd, $S^{k}V(x)\equiv S^{k+1}V(x)$ and $\forall x,k$ even,  $S^{k}V(x)\equiv S^{k+1}V(x)$
\item \textbf{Rule Three:}

$\forall \{n; y\} \in \mathbb{N}^{2}$, $\forall x$ odd non B, $3^{n}x\equiv y\Rightarrow \bigwedge\limits_{i=1}^n (V(4^{i}3^{n-i}x))\wedge S(V(4^{i}3^{n-i}x)) \equiv y$
\item \textbf{Rule Four:}

$\forall \{n;y\} \in \mathbb{N}^{2}$ , $\forall x$ odd non B, $S(3^{n}x)\equiv y\Rightarrow \bigwedge\limits_{i=1}^n (S(4^{i}3^{n-i}x)\wedge S^{2}(4^{i}3^{n-i}x))\equiv y$
        
\item \textbf{Rule Five} 

$\forall n \in \mathbb{N}$, $\forall y \in \mathbb{N}$, $\forall x$ odd non B where $3^{n}x$ is of rank 1, $a\equiv y$, $a=G(3^{n}x)\\
\Rightarrow \bigwedge\limits_{i=0}^n (S^{i}(G(3^{n-i}x)) \wedge S^{i+1}(G(3^{n-i}x))) \equiv y$
\end{itemize}
\end{theorem}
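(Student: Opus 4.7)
My plan is to prove all five rules by uniform direct computation of the Syracuse action $\mathrm{Syr}$ on the algebraic expressions they involve, with everything resting on a careful bookkeeping of the $2$-adic valuation $v_2(3\cdot\text{expr}+1)$ and the repeated use of Rule One as a bridge.

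I would dispatch Rule One first by observing that $3V(x)+1 = 12x+4 = 4(3x+1)$, so $\mathrm{Syr}(V(x)) = (3x+1)/2^{v_2(3x+1)} = \mathrm{Syr}(x)$; both numbers share the same next odd successor and hence $V(x)\equiv x$. For Rule Two I would expand $S^k V(x)=2^{k+2}x+2^{k+1}-1$, compute $3 S^k V(x)+1 = 2(3\cdot 2^{k+1}x+3\cdot 2^k-1)$ whose inner factor is odd for $k\geq 1$, then apply $\mathrm{Syr}$ once more and invoke Rule One to verify that the orbits of $S^k V(x)$ and $S^{k+1}V(x)$ merge at a common expression (for small $k$ a direct check gives $V(9x+4)$); the two parity clauses of the theorem then follow by a routine case split.

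For Rules Three, Four and Five the backbone would be a single shift lemma: setting $F_i := 4^i 3^{n-i} x$ for $x$ odd and non-$B$, direct computation yields
\[
\mathrm{Syr}(V(F_i))=3F_i+1=4^i 3^{n-i+1}x+1,\qquad \mathrm{Syr}(S(F_i))=S(F_{i-1}),
\]
together with analogous identities for $SV(F_i)$, $S^2(F_i)$, and for the $G$-variants $S^j G(3^{n-j}x)$ entering Rule Five. Iterating these identities $i$ times collapses the $4$-exponent and lands on $V(3^n x)$, $S(3^n x)$, or $G(3^n x)$; applying Rule One (and, in Rule Five, the rank-$1$ hypothesis on $3^n x$ to pin down the $v_2$ of the $G$-step) ties the resulting expression back to $3^n x$, $S(3^n x)$, or $a$ respectively, so that transitivity of $\equiv$ with the hypothesis on $y$ yields the desired conclusion for every index in the conjunction.

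The main obstacle I foresee is the rigorous control of the $2$-adic valuation at each intermediate $\mathrm{Syr}$ step in Rules Three through Five: the hypothesis \emph{$x$ odd, non-$B$} is exactly what guarantees that the division after $3n+1$ strips off the intended powers of $2$ and no more, so that the exponent shifts $i\mapsto i-1$ and $n-i\mapsto n-i+1$ propagate cleanly through the induction; a single off-by-one in $v_2$ would break the correspondence. The bulk of the work is therefore a careful case-distinguished verification that the residues of $4^i 3^{n-i} x$ modulo small powers of $2$ stay in the predicted window, which is what ultimately licenses the clean algebraic collapse described above.
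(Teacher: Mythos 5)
Your plan follows essentially the same route as the paper's own proof: the paper likewise dispatches Rule One via $3(4b+1)+1=4(3b+1)$, proves Rule Two by tracking how each Syracuse step strips one trailing binary $1$ from $S^kV(x)$ (packaged there as a ``rank'' lemma plus a mod-$8$ classification into ``vertical even/odd'' numbers), and obtains Rules Three through Five by exactly your exponent collapse, written there as the iteration $(g-1)\cdot(3/4)^k+1$ applied to numbers of the form $2^{n+3}b+1$, with Rule One closing the loop at $V(3^n x)$. The one substantive slip is your closing claim that the non-B hypothesis is what controls the $2$-adic valuations: non-B is a base-$3$ condition with no bearing on parity, and what actually licenses your (correct) shift identities $\mathrm{Syr}(V(F_i))=V(F_{i-1})$ and $\mathrm{Syr}(S(F_i))=S(F_{i-1})$ is simply that $F_i=4^i3^{n-i}x$ is even for $i\ge 1$; as in the paper, non-B merely normalizes $x$ as the $3$-free seed of the family and is not load-bearing in the forward implications.
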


In the following we will prove these five rules.
\begin{note}
In reference to \textbf{Figure 1} we will call \textbf{"vertical even"} a number that can be written $V(e)$ where $e$ is even, and \textbf{"vertical odd"} if it can be written $V(o)$ where $o$ is odd. For example, $9$ is the first vertical even number and $5$ is the first vertical odd. 
\end{note}

\subsection{Proving Rule One}
If $a$ is written $4b+1$ then $3a+1 = 12b+4 = 4(3b+1)$ therefore $a\equiv b$.

\subsection{Proving Rule Two}
\begin{lemma}
Let $a$ be a number of rank $1$ thus with an odd number $p$ so that $a=G(p)$ then $Syr(S(a))=G(3\cdot p)$. Let $a$ be a number of rank $n$ so that $S^{-(n-1)}(a)=G(p)$ then $Syr^{n-1}(a)=G(3^{n-1}\cdot p)$
\end{lemma}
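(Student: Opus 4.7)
The plan is to observe that the first statement is just the $n=2$ instance of the second (with the role of $a$ played by $S(a)$), so both parts reduce to a single induction on $n$. The setup I would use is a compact closed form for rank-$n$ numbers sitting above $G(p)$ in the $S$-tree: iterating $S(x)+1 = 2(x+1)$ gives the telescoping identity $S^{k}(x) = 2^{k}(x+1) - 1$, so the hypothesis $S^{-(n-1)}(a) = G(p) = 2p - 1$ yields
\[
a \;=\; S^{n-1}(2p-1) \;=\; 2^{n-1}(2p) - 1 \;=\; 2^{n}p - 1.
\]
This parametrization $a = 2^{n} p - 1$ is the workhorse; it lets me read off the rank ($n$, since $p$ is odd) and keep track of the "odd core" $p$ throughout the iteration.

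Next I would establish the single-step engine: for $k \geq 2$ and $p$ odd, set $N_{k,p} := 2^{k} p - 1$ and compute
\[
3 N_{k,p} + 1 \;=\; 3 \cdot 2^{k} p - 2 \;=\; 2\bigl(2^{k-1}(3p) - 1\bigr).
\]
Since $k-1 \geq 1$ and $3p$ is odd, the inner factor $2^{k-1}(3p) - 1 = N_{k-1,\,3p}$ is already odd, so exactly one halving suffices and we obtain $\mathrm{Syr}(N_{k,p}) = N_{k-1,\,3p}$. In words, one application of $\mathrm{Syr}$ drops the rank by $1$ and multiplies the odd core by $3$.

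With these two ingredients the induction on $n$ writes itself. The base $n=1$ says $\mathrm{Syr}^{0}(a) = a = G(p) = G(3^{0}p)$, which is trivial. For the step, write $a = N_{n,p}$ and apply the engine once to land on $N_{n-1,\,3p}$, whose $(n-2)$-fold $S$-preimage is $G(3p)$; the induction hypothesis (applied with rank $n-1$ and odd core $3p$) then gives $\mathrm{Syr}^{n-2}(N_{n-1,\,3p}) = G(3^{n-2}\cdot 3p) = G(3^{n-1}p)$, hence $\mathrm{Syr}^{n-1}(a) = G(3^{n-1}p)$.

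The only subtle point, and what I would flag as the main obstacle, is a bookkeeping constraint rather than a conceptual one: the engine is only valid while $k \geq 2$, because at $k=1$ the number $N_{1,p} = G(p)$ itself has even image $3p-1$ under $3(\cdot)+1$, divisible by more than one factor of $2$, and $\mathrm{Syr}$ no longer respects the clean formula. The lemma is calibrated precisely to stop at $k=1$: starting from rank $n$ and applying $\mathrm{Syr}$ exactly $n-1$ times leaves us at rank $1$, i.e.\ at $G(3^{n-1}p)$, without ever invoking the engine in the forbidden regime. I would therefore take care, in the induction, to state the invariant "after $j$ steps we are at $N_{n-j,\,3^{j}p}$ with $n-j \geq 1$" so that the rank assumption is preserved throughout.
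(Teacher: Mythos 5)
Your proposal is correct: the closed form $S^{k}(x)=2^{k}(x+1)-1$, the parametrization $a=2^{n}p-1$, the one-step formula $Syr(2^{k}p-1)=2^{k-1}(3p)-1$ for $k\geq 2$, and the induction all check out, and you are right that the first assertion is the $n=2$ case of the second applied to $S(a)$. The route differs from the paper's mainly in how the induction is organized. The paper performs the same rank-$2$ base computation ($3(4p-1)+1=2(6p-1)$, so $Syr(S(a))=G(3p)$), but its inductive device is the commutation identity $Syr(S(x))=S(Syr(x))$ for $x$ of rank at least $2$, verified at one further level ($Syr(S^{2}(a))=S(Syr(S(a)))=6a+5$) and then asserted in general; the $S$'s are peeled off one at a time as $Syr$ is iterated. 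You instead collapse the entire $S$-tower into the single closed form $2^{n}p-1$ and descend directly, which makes the induction hypothesis and the rank bookkeeping (the invariant $n-j\geq 1$, i.e.\ never invoking the engine at $k=1$) fully explicit — a point the paper's ``let's generalize to the $n$'' step leaves implicit. What the paper's formulation buys is the commutation relation itself, which it reuses in the subsequent lemmas toward Rule Two; your parametrization recovers it for free, since $S(N_{k,p})=N_{k+1,p}$ and hence $S(Syr(N_{k,p}))=N_{k,3p}=Syr(N_{k+1,p})$ for $k\geq 2$. Both arguments rest on the identical arithmetic fact $3(2^{k}p-1)+1=2\bigl(2^{k-1}(3p)-1\bigr)$; yours is the tighter write-up.
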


\begin{proof}
If $a = 2p-1$, $p$ is odd, then it follows:
\[
{\renewcommand{\arraystretch}{1.8}
\begin{array}{l}
S(a)=4p-1\\
\frac{3\cdot S S(a)+1}{2} = \frac{12p-2}{2} = 6p-1 = G(3\cdot p)\\
Syr(S(a))= G(3\cdot p)
\end{array}}
\]

Let's generalize to the $n$. If $Syr(S(a))$ can be written $G(3\cdot p)$ it is also of rank 1, whereas $S(a)$ was of rank 2, therefore, the Syracuse action has made it lose one rank. All we have to prove now is that $Syr(S^2(a))$ = $S(Syr(S(a)))$ under those conditions:
\[
{\renewcommand{\arraystretch}{1.8}
\begin{array}{l}
\frac{3\cdot(S^2(a))+1}{2}= 6a+5\\
S(Syr(S(a)))=S(3a+2)=6a+5=Syr(S^2(a))
\end{array}}
\]

If a is of rank $n>1$, $Syr(a)$ is of rank $n-1$, and $Syr(S(a)) = S(Syr(a))$
\end{proof}

\begin{note}
The $3n+1$ action over an odd number, since it necessary yields an even one, is \textit{in fine} equivalent to adding $1$ to it, then the half of the result, then $-1$. How many times one can add an half to an odd number $+1$ directly depends on its base 2 representation, and in particular its number of consecutive end digits 1. Let us take Mersenne numbers for example, which are defined as $2^n-1$. One can transform them consecutively in this way a number of time equal to their rank-1, indeed, $31$, which is written $11111$ in base 2 is of rank $5$, because $32=2^5$ so if one repeats the action "add to the number+1 the half of itself" this will yield an even result \textbf{exactly four consecutive times}. Thus, any strictly ascending Collatz orbit concerns only numbers $a$ of rank $n>1$, and is defined by
\[
(a+1)\cdot \left(\frac{3}{2}\right)^{n-1}-1
\]
\end{note}

\begin{lemma}
Let $a$ be an odd number of rank $1$ that is vertical even, then $3a$ is of rank 2 or more, and $9a$ is vertical even. Let $a$ be an odd number of rank $1$ that is vertical odd, then $3a$ is of rank $2$ or more, and $9a$ is vertical odd.
\end{lemma}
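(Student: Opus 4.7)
The plan is to reformulate the ``vertical even/odd'' hypothesis as a congruence condition modulo $8$ and then verify the two conclusions by direct computation. Writing $a=V(e)=4e+1$, the parity of $e$ is governed by the bit at position $2$ of $a$: if $e$ is even then $a\equiv 1\pmod 8$, and if $e$ is odd then $a\equiv 5\pmod 8$. In particular, in both cases $a\equiv 1\pmod 4$, so the ``rank $1$'' clause in the hypothesis is automatic for any vertical number -- every value of $V$ has rank exactly $1$. This remark will make the rank claim on $3a$ almost immediate.

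Next I would dispose of the rank statement for $3a$. Since $a\equiv 1\pmod 4$ in both cases, we have $3a\equiv 3\pmod 4$, so the last two bits of $3a$ in base $2$ are both $1$; by the definition of rank, this gives rank at least $2$. (The exact rank depends on higher-order bits and is not needed.)

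Third, I would check the type of $9a$ by an explicit parametrization. In the vertical even case, writing $a=8f+1$ yields $9a=72f+9=8(9f+1)+1=V(2(9f+1))$, exhibiting $9a$ as $V$ of an even number, hence vertical even. In the vertical odd case, writing $a=8f+5$ yields $9a=72f+45=8(9f+5)+5=V(2(9f+5)+1)$, exhibiting $9a$ as $V$ of an odd number, hence vertical odd. Symbolically, multiplication by $9$ acts as the identity on the residues $\{1,5\}\pmod 8$, which is exactly the dichotomy that distinguishes vertical even from vertical odd.

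There is no substantive obstacle in this lemma beyond unwrapping the ``vertical'' terminology and tracking residues modulo $8$; once the hypothesis is translated into $a\equiv 1$ or $a\equiv 5 \pmod 8$, the conclusions about $3a \pmod 4$ and $9a \pmod 8$ are one-line verifications. The only minor care needed is to remember that in Note 2, ``vertical even'' refers to the parity of the \emph{preimage} $e$ under $V$, not to the parity of $V(e)$ itself (which is always odd).
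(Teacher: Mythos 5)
Your proof is correct and follows essentially the same route as the paper's: both arguments parametrize the vertical even and vertical odd cases as $a=8k+1$ and $a=8k+5$, verify that $3a\equiv 3\pmod 4$ (the paper phrases this as $S^{-1}(3a)$ being odd) to get rank at least $2$, and check the parity of $V^{-1}(9a)=18k+2$ or $18k+11$ to classify $9a$. Your observation that multiplication by $9$ fixes the residues $1$ and $5$ modulo $8$ is a clean summary of the same computation.
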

 
\begin{proof}
If $a$ is vertical even it can be written $8k+1$ $\forall k:3a=24k+3$ and this number admits an $S^{-1}$ that is $12k+1$, which is an odd number, therefore $3a$ is at least of rank $2$.

Moreover, $9a=72k+9$ and this number admits a $V^{-1}$ that is $18k+2$, an even number. Now if $a$ is vertical odd, it can be written $8k+5$ and $\forall k:3a=24k+15$ and $9a=72k+45$. It follows that $3a$ admits an $S^{-1}$ and $9a$ admits a $V^{-1}$, respectively $12k+7$ and $18k+11$ and they are both odd.
\end{proof}

\begin{lemma}
Let $a$ be a number that is vertical even, then $(a) \equiv S(a)$ and $S^k(a)\equiv S^{k+1}(a)$ for any even k.
Let $a$ be a number that is vertical odd, then $S(a) \equiv S^2(a)$ and $S^k(a)\equiv S^{k+1}(a)$ for any odd k.
\end{lemma}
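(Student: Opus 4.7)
The plan is to prove the single base case $a \equiv S(a)$ for vertical even $a$ directly from Lemma 2.2 and Rule One, and then reduce every other matched-parity case $S^k(a) \equiv S^{k+1}(a)$ to that base case by a uniform use of Lemma 2.2 applied three times.

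For the base case, I would write $a = 8n+1 = G(p)$ with $p = 4n+1$. A short $2$-adic computation gives $\operatorname{Syr}(a) = (3p-1)/2$, while Lemma 2.2 applied to $S(a)$ (which has rank $2$) yields $\operatorname{Syr}(S(a)) = G(3p)$. The key observation is that these two outputs are related by $V$: a direct check shows $G(3p) = V(\operatorname{Syr}(a))$. Rule One then gives $\operatorname{Syr}(S(a)) = V(\operatorname{Syr}(a)) \equiv \operatorname{Syr}(a)$, so $a$ and $S(a)$ meet in the forward orbit, i.e.\ $a \equiv S(a)$.

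For the general claim, let $a = G(p)$ with $p = (a+1)/2$ odd, and let $k$ share parity as in the statement. Since $S^k(a)$ has rank $k+1$ and $S^{k+1}(a)$ has rank $k+2$, Lemma 2.2 applied twice gives
\[
S^k(a) \equiv G(3^k p) \quad \text{and} \quad S^{k+1}(a) \equiv G(3^{k+1} p).
\]
A third application of Lemma 2.2 to the rank-$1$ number $G(3^k p)$ yields $\operatorname{Syr}\bigl(S(G(3^k p))\bigr) = G(3^{k+1} p)$, so $S(G(3^k p)) \equiv G(3^{k+1} p)$. The entire remaining task is to prove $G(3^k p) \equiv S(G(3^k p))$, which is exactly the base case applied to the new number $G(3^k p)$ \emph{provided} that $G(3^k p)$ is itself vertical even.

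The crux is the $\pmod 4$ check: since $G(q) = 2q-1$ is vertical even iff $q \equiv 1 \pmod 4$, we need $3^k p \equiv 1 \pmod 4$. In the vertical even case, $p \equiv 1 \pmod 4$ and $k$ even give $3^k p \equiv 1 \cdot 1 \equiv 1 \pmod 4$; in the vertical odd case, $p \equiv 3 \pmod 4$ and $k$ odd give $3 \cdot 3 \equiv 1 \pmod 4$. Both matched-parity cases thus feed into the base case, and chaining the equivalences $S^k(a) \equiv G(3^k p) \equiv S(G(3^k p)) \equiv G(3^{k+1} p) \equiv S^{k+1}(a)$ closes the argument. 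The main obstacle I anticipate is purely bookkeeping --- controlling the ranks and $2$-adic valuations so that each invocation of Lemma 2.2 is legal --- since the logical structure is not an induction on $k$ but rather a single reduction to one atomic identity; notably, the vertical odd case $S(a) \equiv S^2(a)$ is not treated separately but is subsumed by the same reduction, which always lands inside the vertical even base case.
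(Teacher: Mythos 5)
Your proposal is correct and follows essentially the same route as the paper: the base case rests on the identity $\operatorname{Syr}(S(a)) = V(\operatorname{Syr}(a))$ combined with Rule One, and the general matched-parity case is reduced to that base case via the rank lemma $\operatorname{Syr}^k(S^k(a)) = G(3^k p)$ together with the observation that $3^k p \bmod 4$ is preserved under the stated parity pairing. The only difference is presentational --- you verify the verticality of $G(3^k p)$ by a direct computation modulo $4$, where the paper invokes its preceding lemma on $9a$ being vertical of the same kind as $a$; these are the same fact.
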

 
\begin{proof}
If $a$ is vertical even then it can be written as $G(p)$ where $p$ is necessarily vertical (odd or even). We proved that $3p$ is then of rank $2$ or more and also that we have $Syr(S(a))=G(p)$ so it is necessarily vertical odd (since $3d$ is of rank $2$ or more) so $Syr(a)=V^{-1}(Syr(S(a))$ and therefore $a\equiv S(a)$. This behavior we can now generalize to the $n$, because if $a$ is vertical even with $a=G(p)$, then the lemmas we used also provide that $Syr^n(S^n(a))=G(3^n\cdot p)$ and therefore $Syr^n(S^n(a))$ will be vertical even for any even $n$ because $3^n\cdot p$ will be vertical something (even or odd, depending on $p$ only) for any even $n$.

Now if $a$ is vertical odd it can be written $G(p)$ and $p$ is necessarily of rank $2$ or more because $G\circ S = V$. Thus $3p$ is vertical (even or odd) and therefore $Syr(S(a))=G(3p)$ is vertical even.
\end{proof}

\begin{note}
Observe that in the process of proving \textbf{Rule Two} we also demonstrated that any number of rank $2$ or more is finitely turned into a rank $1$ number of type A by the Collatz dynamic, and that any number $x$ of rank $2$ or more so that $x\equiv S(x)$ under \textbf{Rule Two} is finitely mapped to a type A number that is vertical even, therefore \textbf{proving the convergence of such numbers is enough to prove the Collatz Conjecture}.
\end{note}

\subsection{Proving Rules Three and Four}
\begin{lemma}
Let $a$ be a vertical even number with $a=G^{n+2}(S(b))$ where $n$ and $b$ are odd, then $a\equiv 3^\frac{n+1}{2}(b)$. Let $a$ be a vertical even number with $a=G^{m+2}(S(b))$ where $m$ is even (zero included) and $b$ is odd, then $a\equiv S(3^\frac{m}{2}(b))$
\end{lemma}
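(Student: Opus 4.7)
The plan is to reduce both cases to the canonical form $a=V(2^{r}b)$ and then iterate the Syracuse action in a loop that peels factors of $2$ off the argument of $V$. Using the identities $G\circ S(x)=V(x)$ and $G\circ V(x)=V(2x)$, both direct calculations from the definitions of $G$, $S$, and $V$, I rewrite $a=G^{n+2}(S(b))=G^{n+1}(V(b))=V(2^{n+1}b)$; so with $r=n+1$ (even, Part 1) or $r=m+1$ (odd, Part 2), both halves become statements about $V(2^{r}b)$.

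A direct computation shows $V(2^{r}b)=2^{r+2}b+1$ has rank exactly $1$, and $3V(2^{r}b)+1=4(3\cdot 2^{r}b+1)$ with the inner factor odd for $r\ge 1$. Therefore $Syr(V(2^{r}b))=3\cdot 2^{r}b+1$, which equals $V(2^{r-2}\cdot 3b)$ when $r\ge 2$ and equals $S(3b)$ when $r=1$. Each Syracuse step thus lowers $r$ by $2$ and triples the base $b$, until $r=1$ at which point it converts into an $S$-form.

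For Part 1 ($n$ odd, so $r$ even), I iterate Syracuse $(n+1)/2$ times to land on $V(3^{(n+1)/2}b)$; Rule One, applicable because $3^{(n+1)/2}b$ is odd, then gives $V(3^{(n+1)/2}b)\equiv 3^{(n+1)/2}b$, and transitivity of $\equiv$ closes the chain. For Part 2 ($m$ even, so $r$ odd), $m/2$ Syracuse steps bring us to the vertical even $V(2c)$ with $c=3^{m/2}b$ odd; from here I combine Lemma 2.4 (which gives $V(2c)\equiv S(V(2c))$), Lemma 2.2 applied to $V(2c)=G(V(c))$ (which yields $Syr(S(V(2c)))=G(3V(c))=V(S(3c))$), and Rule One on the odd input of this outer $V$, to chain down from $V(2c)$ through $S(V(2c))$, $V(S(3c))$, and $S(3c)$ toward the asserted $S(3^{m/2}b)$.

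The hardest step will be this last identification in Part 2: a naive Syracuse iteration lands on $S(3c)$ rather than on $S(c)$, so an additional equivalence $S(3c)\equiv S(c)$ is required to close the gap. I expect this to be handled by one more application of the Lemma 2.2 plus Rule One package to the vertical structure sitting above $S(3c)$, essentially a descent that mirrors the main iteration but at the $S$-level rather than the $V$-level, and the careful bookkeeping of this final collapse of the extra factor of $3$ is where the main technical effort of the proof will be concentrated.
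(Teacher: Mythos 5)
Your rewriting $a=G^{n+2}(S(b))=V(2^{n+1}b)$ and the loop $Syr(V(2^{r}b))=V(2^{r-2}\cdot 3b)$ is exactly the paper's argument in different clothing: the paper computes $Syr(a)=2^{n+1}\cdot 3b+1=G^{n}(S(3b))$ and then iterates the transformation $(g-1)\cdot(3/4)^{k}+1$ on its ``Variety V'' and ``Variety S'' normal forms, which is the same trade of one factor $4$ for one factor $3$ per Syracuse step. Part 1 of your argument is complete and matches the paper's Variety V case, terminating at $V(3^{(n+1)/2}b)$ and closing with Rule One.

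The genuine gap is the final step of Part 2, and you have correctly located it: your iteration terminates at $S(3c)=S(3^{\frac{m}{2}+1}b)$, and the bridge $S(3c)\equiv S(c)$ that you defer to ``one more application of the Lemma 2.2 plus Rule One package'' does not go through. Lemma 2.2 requires writing the number as $G(p)$ with $p$ odd, but $G^{-1}(S(3c))=3c+1$ is even, so the rank-$1$ mechanism is unavailable; and $S(3c)\equiv S(c)$ is not a local identity of the dynamic but a new equivalence of the same strength as the one being proved (e.g.\ for $m=2$, $b=1$ it asserts $19\equiv 7$, which holds only because their orbits happen to merge at $11$). You should know, however, that this gap is forced by the statement rather than by your method: the paper's own Variety S computation also lands on $S(3^{k}b)$ with $2k+1=m+3$, i.e.\ $S(3^{\frac{m}{2}+1}b)$, and for $m=0$, $b=1$ the paper explicitly computes $Syr(9)=7=S(3)$ while the lemma as stated asserts $9\equiv S(3^{0}\cdot 1)=3$. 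The exponent in the second half of the statement appears to be off by one relative to the paper's proof; your derivation actually proves what the paper proves, and the unprovable residue $S(3c)\equiv S(c)$ is the discrepancy between the two. Either correct the statement to $a\equiv S(3^{\frac{m}{2}+1}b)$ and delete your last paragraph, or accept that no amount of ``careful bookkeeping'' will close the gap as posed.
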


\begin{proof}
If $a=G^{n+2}(S(b))$ by definition $a=2^{n+3}b+1$. Then $3\cdot a+1=3(2^{n+3}b+1)+1)=2^{n+3}\cdot(3b)+4.$ As this expression can be divided by $2$ no more than twice, we have $Syr(a)=2^{n+1}3b+1 = G^{n}(S(3b))$.

Note that if $n=1$ then $V^{-1}(Syr(a))=V^{-1}(2^{2}\cdot(3a)+1)=2^{2}\cdot\frac{1}{4}\cdot(3b)=3b$ which is of course an odd number. Therefore $Syr(a)$ is vertical odd and $V^{-1}(Syr(a))=3b$ thus we have proven that $a\equiv3b$.

If $n=0$ then $a=2^3\cdot b+1$ so $3(a+1)=2^3\cdot{3b}+4$ therefore $Syr(a)=S(3b)$ and thus $a \equiv S(3b)$. From this we can generalise the progression of numbers that can be written $G^{n}(x)$ where $x$ is of rank $2$ or more. Let $b$ be any odd number:

\begin{itemize}
\item All \textbf{"Variety S"} numbers above $b$ are written $V(b\cdot2^{2k-1})\;\; \text{or}\;\; S(b\cdot2^{2k}) = 2^{2k+1}\cdot b+1$ and
\item all \textbf{"Variety V"} numbers above $b$ are written $V(b\cdot4^{k})$ or equivalently $S(b\cdot2^{2k+1}) = 4^{k+1}\cdot b+1$.
\end{itemize}

Any number $g$ that can be written $G^{n}(V(x))$ with $x$ odd and $n>0$ may thus be finitely reduced under the Collatz dynamic to a number that can be written either $S(3^{m}x)$ or $V(3^{m}x)$ by the repeated following transformation: 
\[
(g-1)\cdot\biggl(\frac{3}{4}\biggr)^{k}+1
\]

Therefore we have indeed that,
\begin{itemize}
\item for Variety S numbers: $2^{2k+1}\cdot b\cdot\left(\frac{3}{4}\right)^{k}+1=2b\cdot3^{k}+1=S(b\cdot3^{k})$, which proves \textbf{Rule Four}.
\item for Variety V numbers: $4\cdot4^{k}\cdot b\cdot\left(\frac{3}{4}\right)^{k}+1=4b\cdot3^{k}+1=V(b\cdot3^{k})$ which proves \textbf{Rule Three} because \textbf{Rule One} already provides that $V(b\cdot3^{k}) \equiv b\cdot3^{k}$.
\end{itemize}
\end{proof}

\subsection{Proving Rule Five}
Any type A number of rank 1 can be written $a=G(b)$ where $b$ is of type B. In proving \textbf{Rule Two} we showed that any number of rank $n>1$ is finitely mapped by the Collatz dynamics to $G(3^{n-1}\cdot G^{-1}(S^{-(n-1)}(a)))$, which combined with \textbf{Rule Two} itself gives \textbf{Rule Five}.

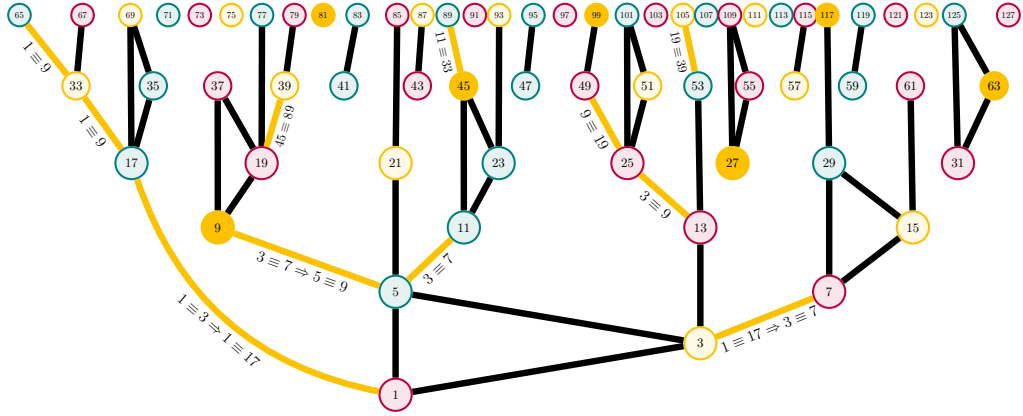
\begin{figure} [H]
\resizebox{1\textwidth}{!} {
\begin{tikzpicture} 
\node[state, draw=purple, fill=purple!10] (1) [ultra thick, below=50pt, left=180pt ] {$1$};
\node[state, draw=gold, fill=gold!10] (3) [ultra thick, below=10pt,  right=30pt ] {$3$};
\node[state, draw=teal, fill=teal!10] (5) [ultra thick, above=30pt, left=180pt ] {$5$};
\node[state, draw=purple, fill=purple!10] (7) [ultra thick, above=30pt, right=130pt ] {$7$};
\node[state, draw=gold, fill=gold!10] (15) [ultra thick, above=80pt, right=195pt ] {$15$};
\node[state, draw=purple, fill=purple!10] (13) [ultra thick, above=80pt, right=30pt ] {$13$};
\node[state, draw=teal, fill=teal!10] (11) [ultra thick, above=80pt, left=127pt ] {$11$};
\node[state, draw=gold, fill=gold] (9) [ultra thick, above=80pt, left=318pt ] {$9$};
\node[state, draw=teal, fill=teal!10] (17) [ultra thick, above=130pt, left=385pt ] {$17$};
\node[state, draw=purple, fill=purple!10] (19) [ultra thick, above=130pt, left=284pt ] {$19$};
\node[state, draw=gold, fill=gold!10] (21) [ultra thick, above=130pt, left=180pt ] {$21$};
\node[state, draw=teal, fill=teal!10] (23) [ultra thick, above=130pt, left=100pt ] {$23$};
\node[state, draw=purple, fill=purple!10] (25) [ultra thick, above=130pt, left=0pt ] {$25$};
\node[state, draw=gold, fill=gold] (27) [ultra thick, above=130pt, right=55pt ] {$27$};
\node[state, draw=teal, fill=teal!10] (29) [ultra thick, above=130pt, right=130pt ] {$29$};
\node[state, draw=purple, fill=purple!10] (31) [ultra thick, above=130pt, right=230pt ] {$31$};
\node[state, draw=gold, fill=gold!10, minimum size=0.7pt] (33) [ultra thick, above=190pt, left=430pt ] {$33$};
\node[state, draw=teal, fill=teal!10, minimum size=0.7pt] (35) [ultra thick, above=190pt, left=370pt ] {$35$};
\node[state, draw=purple, fill=purple!10, minimum size=0.7pt] (37) [ultra thick, above=190pt, left=320pt ] {$37$};
\node[state, draw=gold, fill=gold!10, minimum size=0.7pt] (39) [ultra thick, above=190pt, left=268pt ] {$39$};
\node[state, draw=teal, fill=teal!10, minimum size=0.7pt] (41) [ultra thick, above=190pt, left=222pt ] {$41$};
\node[state, draw=purple, fill=purple!10, minimum size=0.7pt] (43) [ultra thick, above=190pt, left=165pt ] {$43$};
\node[state, draw=gold, fill=gold, minimum size=0.7pt] (45) [ultra thick, above=190pt, left=129pt ] {$45$};
\node[state, draw=teal, fill=teal!10, minimum size=0.7pt] (47) [ultra thick, above=190pt, left=81pt ] {$47$};
\node[state, draw=purple, fill=purple!10, minimum size=0.7pt] (49) [ultra thick, above=190pt, left=35pt ] {$49$};
\node[state, draw=gold, fill=gold!10, minimum size=0.7pt] (51) [ultra thick, above=190pt, right=-9pt ] {$51$};
\node[state, draw=teal, fill=teal!10, minimum size=0.7pt] (53) [ultra thick, above=190pt, right=30pt ] {$53$};
\node[state, draw=purple, fill=purple!10, minimum size=0.7pt] (55) [ultra thick, above=190pt, right=70pt ] {$55$};
\node[state, draw=gold, fill=gold!10, minimum size=0.7pt] (57) [ultra thick, above=190pt, right=105pt ] {$57$};
\node[state, draw=teal, fill=teal!10, minimum size=0.7pt] (59) [ultra thick, above=190pt, right=150pt ] {$59$};
\node[state, draw=purple, fill=purple!10, minimum size=0.7pt] (61) [ultra thick, above=190pt, right=195pt ] {$61$};
\node[state, draw=gold, fill=gold, minimum size=0.7pt] (63) [ultra thick, above=190pt, right=260pt ] {$63$};
\node[state, draw=teal, fill=teal!10, scale=0.7] (65) [ultra thick, above=350pt, left=680pt ] {$65$};
\node[state, draw=purple, fill=purple!10, scale=0.7] (67) [ultra thick, above=350pt, left=610pt ] {$67$};
\node[state, draw=purple, fill=purple!10, scale=0.7] (127) [ultra thick, above=350pt, right=390pt ] {$127$};
\node[state, draw=gold, fill=gold!10, scale=0.7] (69) [ultra thick, above=350pt, left=557pt ] {$69$};
\node[state, draw=teal, fill=teal!10, scale=0.7] (71) [ultra thick, above=350pt, left=515pt ] {$71$};
\node[state, draw=purple, fill=purple!10, scale=0.7] (73) [ultra thick, above=350pt, left=480pt ] {$73$};
\node[state, draw=gold, fill=gold!10, scale=0.7] (75) [ultra thick, above=350pt, left=445pt ] {$75$};
\node[state, draw=teal, fill=teal!10, scale=0.7] (77) [ultra thick, above=350pt, left=411pt ] {$77$};
\node[state, draw=purple, fill=purple!10, scale=0.7] (79) [ultra thick, above=350pt, left=375pt ] {$79$};
\node[state, draw=gold, fill=gold, scale=0.7] (81) [ultra thick, above=350pt, left=343pt ] {$81$};
\node[state, draw=teal, fill=teal!10, scale=0.7] (83) [ultra thick, above=350pt, left=305pt ] {$83$};
\node[state, draw=purple, fill=purple!10, scale=0.7] (85) [ultra thick, above=350pt, left=261pt ] {$85$};
\node[state, draw=gold, fill=gold!10, scale=0.7] (87) [ultra thick, above=350pt, left=233pt ] {$87$};
\node[state, draw=teal, fill=teal!10, scale=0.7] (89) [ultra thick, above=350pt, left=205pt ] {$89$};
\node[state, draw=purple, fill=purple!10, scale=0.7] (91) [ultra thick, above=350pt, left=175pt ] {$91$};
\node[state, draw=gold, fill=gold!10, scale=0.7] (93) [ultra thick, above=350pt, left=148pt ] {$93$};
\node[state, draw=teal, fill=teal!10, scale=0.7] (95) [ultra thick, above=350pt, left=110pt ] {$95$};
\node[state, draw=purple, fill=purple!10, scale=0.7] (97) [ultra thick, above=350pt, left=75pt ] {$97$};
\node[state, draw=gold, fill=gold, scale=0.7] (99) [ultra thick, above=350pt, left=40pt ] {$99$};
\node[state, draw=teal, fill=teal!10, scale=0.7] (101) [ultra thick, above=350pt, left=6pt ] {$101$};
\node[state, draw=purple, fill=purple!10, scale=0.7] (103) [ultra thick, above=350pt, left=-26pt ] {$103$};
\node[state, draw=gold, fill=gold!10, scale=0.7] (105) [ultra thick, above=350pt, left=-56pt ] {$105$};
\node[state, draw=teal, fill=teal!10, scale=0.7] (107) [ultra thick, above=350pt, left=-82pt ] {$107$};
\node[state, draw=purple, fill=purple!10, scale=0.7] (109) [ultra thick, above=350pt, left=-108pt ] {$109$};
\node[state, draw=gold, fill=gold!10, scale=0.7] (111) [ultra thick, above=350pt, left=-136pt ] {$111$};
\node[state, draw=teal, fill=teal!10, scale=0.7] (113) [ultra thick, above=350pt, left=-165pt ] {$113$};
\node[state, draw=purple, fill=purple!10, scale=0.7] (115) [ultra thick, above=350pt, left=-192pt ] {$115$};
\node[state, draw=gold, fill=gold, scale=0.7] (117) [ultra thick, above=350pt, left=-216pt ] {$117$};
\node[state, draw=teal, fill=teal!10, scale=0.7] (119) [ultra thick, above=350pt, left=-256pt ] {$119$};
\node[state, draw=purple, fill=purple!10, scale=0.7] (121) [ultra thick, above=350pt, left=-292pt ] {$121$};
\node[state, draw=gold, fill=gold!10, scale=0.7] (123) [ultra thick, above=350pt, left=-326pt ] {$123$};
\node[state, draw=teal, fill=teal!10, scale=0.7] (125) [ultra thick, above=350pt, left=-357pt ] {$125$};
\path[every node/.style={sloped,anchor=north,auto=false}]
(1) edge [line width=5pt] node {} (3)
(1) edge [line width=5pt] node {} (5)
(3) edge [line width=5pt] node {} (5)
(3) edge [line width=5pt] node {} (13)
(7) edge [line width=5pt] node {} (15)
(7) edge [line width=5pt] node {} (29)
(5) edge [line width=5pt] node {} (21)
(9) edge [line width=5pt] node {} (19)
(11) edge [line width=5pt] node {} (23)
(15) edge [line width=5pt] node {} (29)
(17) edge [line width=5pt] node {} (35)
(9) edge [line width=5pt] node {} (37)
(19) edge [line width=5pt] node {} (37)
(11) edge [line width=5pt] node {} (45)
(23) edge [line width=5pt] node {} (45)
(25) edge [line width=5pt] node {} (51)
(13) edge [line width=5pt] node {} (53)
(27) edge [line width=5pt] node {} (55)
(15) edge [line width=5pt] node {} (61)
(31) edge [line width=5pt] node {} (63)
(17) edge [line width=5pt] node {} (69)
(35) edge [line width=5pt] node {} (69)
(19) edge [line width=5pt] node {} (77)
(21) edge [line width=5pt] node {} (85)
(23) edge [line width=5pt] node {} (93)
(25) edge [line width=5pt] node {} (101)
(51) edge [line width=5pt] node {} (101)
(27) edge [line width=5pt] node {} (109)
(55) edge [line width=5pt] node {} (109)
(29) edge [line width=5pt] node {} (117)
(31) edge [line width=5pt] node {} (125)
(63) edge [line width=5pt] node {} (125)
(33) edge [line width=5pt] node {} (67)
(39) edge [line width=5pt] node {} (79)
(41) edge [line width=5pt] node {} (83)
(43) edge [line width=5pt] node {} (87)
(47) edge [line width=5pt] node {} (95)
(49) edge [line width=5pt] node {} (99)
(57) edge [line width=5pt] node {} (115)
(59) edge [line width=5pt] node {} (119)
(1) [-] edge [line width=5pt, draw=gold]  [bend left] node {\large{$1\equiv3 \Rightarrow 1\equiv17$}}(17)
(3) [-] edge [line width=5pt, draw=gold]   node {\large{$1\equiv17 \Rightarrow 3\equiv7$}}(7)
(5) [-] edge [line width=5pt, draw=gold]   node {\large{$3\equiv7 \Rightarrow 5\equiv9$}}(9)
(17) [-] edge [line width=5pt, draw=gold]   node {\large{$1\equiv9$}}(33) 
(33) [-] edge [line width=5pt, draw=gold]   node {\large{$1\equiv9$}}(65) 
(13) [-] edge [line width=5pt, draw=gold]   node {\large{$3\equiv9$}}(25) 
(25) [-] edge [line width=5pt, draw=gold]   node {\large{$9\equiv19$}}(49) 
(5) [-] edge [line width=5pt, draw=gold]   node {\large{$3\equiv7$}}(11) 
(45) [-] edge [line width=5pt, draw=gold]   node {$11\equiv33$}(89) 
(19) [-] edge [line width=5pt, draw=gold]   node {$45\equiv89$}(39) 
(53) [-] edge [line width=5pt, draw=gold]   node {$19\equiv39$}(105);
\end{tikzpicture}
}
\caption{Just a few applications of \textbf{Rules Three, Four and Five} starting from $1\equiv3\equiv5$ are here plotted in gold. \textbf{Rules One and Two} are plotted in black. Whenever a number is connected to $1$ by a finite path of black and/or gold edges it is proven to converge to $1$.}
\label{fig:2}
\end{figure}

\section{The Golden Automaton}
\begin{definition}
On \{$2\mathbb{N}+1$; G, S\} where \textbf{Rules One and Two} are considered pre-computed (the black edges on Figure 2) the systematic computation of \textbf{Rules Three, Four and Five} from number $1$ onward is called the \textbf{"Golden Automaton"}.
\end{definition}

\subsection{"Golden Arithmetic"}
Our  purpose is to develop an \textit{ad hoc} unary algebra that could found a congruence arithmetic specifically made to prove the Collatz conjecture, and which we intend as an epistemological extension of modular arithmetic, hence our use of the symbol $\equiv$ in this article rather than the usual $\thicksim$ which is seen more frequently in the Collatz-related literature. This \textbf{"Golden arithmetic"} involves words taken in the alphabet $\{G; S; V; 3\}$, which we will call in their order of application, just like in turtle graphics. For example VGS3 means $3\cdot S\circ G \circ V$

Rules 3, 4 and 5 may now be reformulated as such, without loss of generality as long as \textbf{Rules One and Two} are still assumed:
\begin{itemize}
\item \textbf{Rule Three:} Let b be of type B, then $b\equiv  VGS3^{-1}$ from b. We will call this action $R_b(x)=16\frac{x}{3}+1$
\item \textbf{Rule Four:} Let c be of type C, then $c\equiv GS3^{-1}$ from c. We will call this action $R_c(x)=\frac{4x-1}{3}$
\item \textbf{Rule Five:} Let a be of type A, then $a\equiv G3^{-1}$ from a. We will call this action $R_a(x)=\frac{2x-1}{3}$
\end{itemize}

As \textbf{Rules One and Two} ensure that the quiver generated by the Golden Automaton is branching, with each type B number that is vertical even providing both a new A type and a new B type number to keep applying respectively rules 5 and 3, we may follow only the pathway of type A numbers to define a single non-branching series of arrows, forming a single infinite branch of the quiver. The latter, if computed from number 15, leads straight to 31 and 27, solving a great deal of other numbers on the way:
{\setlength{\mathindent}{13em}\[
\arraycolsep=0.4pt
\begin{array}{rll}
15   &\equiv81    &\hspace{2em}\text{Rule 3}\\
81   &\equiv1025  &\hspace{2em}\text{First type A reached by \textbf{Rule 3}} \\
1025 &\equiv303   &\hspace{2em}\textbf{Rule 5}\\ 
303  &\equiv607   &\hspace{2em}\text{Rule 2}\\ 
607  &\equiv809   &\hspace{2em}\text{Rule 4}\\
809  &\equiv159   &\hspace{2em}\textbf{Rule 5}\\
159  &\equiv319   &\hspace{2em}\text{Rule 2}\\
319  &\equiv425   &\hspace{2em}\text{Rule 4}\\
425  &\equiv283   &\hspace{2em}\textbf{Rule 5}\\ 
283  &\equiv377   &\hspace{2em}\text{Rule 4}\\ 
377  &\equiv111   &\hspace{2em}\textbf{Rule 5}\\ 
111  &\equiv593   &\hspace{2em}\text{Rule 3}\\
593  &\equiv175   &\hspace{2em}\textbf{Rule 5}\\ 
175  &\equiv233   &\hspace{2em}\text{Rule 4}\\
233  &\equiv103   &\hspace{2em}\textbf{Rule 5}\\ 
103  &\equiv137   &\hspace{2em}\text{Rule 4}\\
137  &\equiv91    &\hspace{2em}\textbf{Rule 5}\\ 
91   &\equiv161   &\hspace{2em}\text{Rule 4}\\
161  &\equiv31    &\hspace{2em}\textbf{Rule 5}\\ 
31   &\equiv41    &\hspace{2em}\text{Rule 4}\\
41   &\equiv27    &\hspace{2em}\textbf{Rule 5} 
\end{array}
\]}

Again, it is in no way a problem, but rather a powerful property of the Golden Automaton that this particular quiver branch already cover 19 steps (and actually more) because each of them is branching into other solutions.

We may follow another interesting sequence to show that in the same way that Mersenne number 15 finitely solves Mersenne number 31, Mersenne number 7 solves Mersenne number 127, this time we will follow a B branch up to $Syr^{6}(127)$ which we know can be written $G(3^6)$ because 127 is the Mersenne of rank 7.

By Rule 4 we have the first equivalence $\boldsymbol{7\equiv9}$ and $\boldsymbol{9\equiv25\equiv49}$.

So by Rule 2 we also have $\boldsymbol{25\equiv51}$.

Rule 3 gives $\boldsymbol{51\equiv273}$ and again $\boldsymbol{273\equiv1457=G(729)\equiv127}$.

The cases of $15$ proving the convergence of $31$ and $27$ and of $7$ proving the one of $127$ naturally lead us to the following conjecture:

\begin{conjecture}
Suppose all odd numbers up to $2^{n}$ are proven to converge to $1$ under the Collatz dynamic, then the Golden Automaton finitely proves the convergence of those up to $2^{n+1}$
\end{conjecture}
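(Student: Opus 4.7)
The natural strategy is strong induction on $N$ within $U_n := \{N \text{ odd} : 2^n < N \leq 2^{n+1}\}$, combined with a case analysis by residue modulo $3$. Assume all odd $N' < 2^n$ have been shown to converge; I must exhibit, for each $N \in U_n$, a finite Golden-Automaton chain of equivalences that reduces $N$ below $2^n$, and strong induction inside $U_n$ then closes the loop.

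First I would use Rules 1 and 2 to prune $U_n$ as aggressively as possible. The Note following the proof of Rule 2 already establishes that every number of rank $\geq 2$ is finitely mapped to a rank-$1$, type-A, vertical-even number, and Rules 1 and 2 propagate the equivalences along this mapping; so the only unresolved elements of $U_n$ lie in a sparse rank-$1$ residue subclass. On this subclass, Rule 5 gives $R_a(N) = (2N-1)/3 < N$, and iterating $R_a$ (whenever the image remains type A) pulls $N$ geometrically toward zero and into $U_{n-1} \cup \ldots \cup U_0$ in $O(n)$ steps. The residual difficulty is when an $R_a$ orbit escapes the type-A class into type B or C; there I would use Rule 3 or Rule 4 to branch the quiver via the vertical-even structure of Lemma 2.3, producing fresh type-A descendants on which Rule 5 re-applies. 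The worked examples $15 \equiv \ldots \equiv 27$ and $7 \equiv \ldots \equiv 127$ preceding the conjecture are explicit traces of precisely this expansion/contraction alternation, and I would try to formalize them as a general algorithm.

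The main obstacle is proving termination of these expansion/contraction cycles, since Rules 3 and 4, with $R_b(x) = 16x/3 + 1$ and $R_c(x) = (4x-1)/3$, can temporarily push values above $2^{n+1}$ before Rule 5 brings them back down. My plan is to exploit the Hydra-game embedding the abstract promises: assign to each node $N$ of the Golden quiver an ordinal below $\varepsilon_0$ built from its $2$-adic rank stack and $3$-adic type, arranged so that Rule 5 decrements the leading coordinate while Rules 3 and 4 grow only strictly lower coordinates, in the Kirby--Paris manner. Termination of the Golden Automaton restricted to $U_n$ then follows from well-foundedness below $\varepsilon_0$. A subsidiary but nontrivial bookkeeping task is verifying that the systematic enumeration from $1$ actually visits every $N \in U_n$ in finite time, rather than merely that a finite chain from $N$ into $U_{n-1}$ exists; this should follow from determinism and branching-completeness of the automaton, but the boundary behavior between $U_n$ and $U_{n-1}$ (in particular, ensuring that rank-changing jumps never skip over an undecided residue) must be tracked with care.
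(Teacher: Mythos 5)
The first thing to note is that the paper does not prove this statement at all: it is presented explicitly as a \emph{conjecture}, motivated only by two worked examples ($15$ resolving $31$ and $27$; $7$ resolving $127$) and the observation that the automaton starting from $1$ covers the first few rows of the binary tree. There is no proof in the paper to compare your argument against, so your proposal must stand entirely on its own --- and it does not.

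The genuine gap is in your termination step, which is where the entire difficulty of the problem lives. You assert that one can ``assign to each node $N$ of the Golden quiver an ordinal below $\varepsilon_0$ built from its $2$-adic rank stack and $3$-adic type, arranged so that Rule 5 decrements the leading coordinate while Rules 3 and 4 grow only strictly lower coordinates,'' but you never construct this assignment or verify the claimed monotonicity. This is not a detail that can be deferred: $R_b(x) = 16x/3+1$ and $R_c(x)=(4x-1)/3$ can increase a value, the type (A, B or C) of $R_a(x)=(2x-1)/3$ is not controlled, and nothing in the five rules bounds how many expansion steps occur before the next contraction. The Kirby--Paris argument works for the Hydra game because the tree-surgery rules are \emph{given} to have the right shape; here you would have to prove that the arithmetic of the three maps $R_a$, $R_b$, $R_c$ realizes that shape, and that is precisely the open content of the conjecture. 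Relatedly, you dismiss as ``subsidiary bookkeeping'' the question of whether the systematic enumeration from $1$ actually reaches every odd $N\in(2^n,2^{n+1}]$ rather than merely some of them; this is not bookkeeping but the other half of the claim, since a terminating automaton that misses even one residue class proves nothing about that class. A correct proof of this conjecture would, by induction on $n$, prove the full Collatz conjecture, so any argument for it must contain a resolution of the full problem; your proposal instead relocates that difficulty into an unconstructed ordinal labelling.
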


And indeed we already have that the Golden Automaton starting with $1$ proves $3$ by \textbf{Rule One}, then $3$ proves all numbers from $5$ to $15$ which in turn prove all numbers from $33$ to $127$. In the next subsection we render larger quivers generated by the Golden Automaton to provide a better understanding of their geometry and fundamental properties, and to demonstrate why it is so, and more generally, why it can be proven they can reach any number in $2\mathbb{N}+1$ in ZFC. 

\subsection{Computational Scale-up}
The purpose of this subsection is to identify provable fundamental properties of the Golden Automaton by scaling it up on the full binary tree over $2\mathbb{N}+1$. To streamline its algorithmic scaling, we use the simplified rules we defined in the previous subsection, again, without loss of generality. Our precise purpose is to pave the way for a formal demonstration that proving the convergence of odd numbers up to $n$ is always isomorphic to a Hydra Game. In the next figures we color all the elements of $24\mathbb{N}-7$ as for example $\{17,41,65,\ldots\}$ in red to as we demonstrate in the next section they precisely from the "heads" in the Hydra Game. 

\begin{figure}[H]
\includegraphics[clip, trim=6cm 0cm 6cm 0cm, width=1.00\textwidth, page=1]{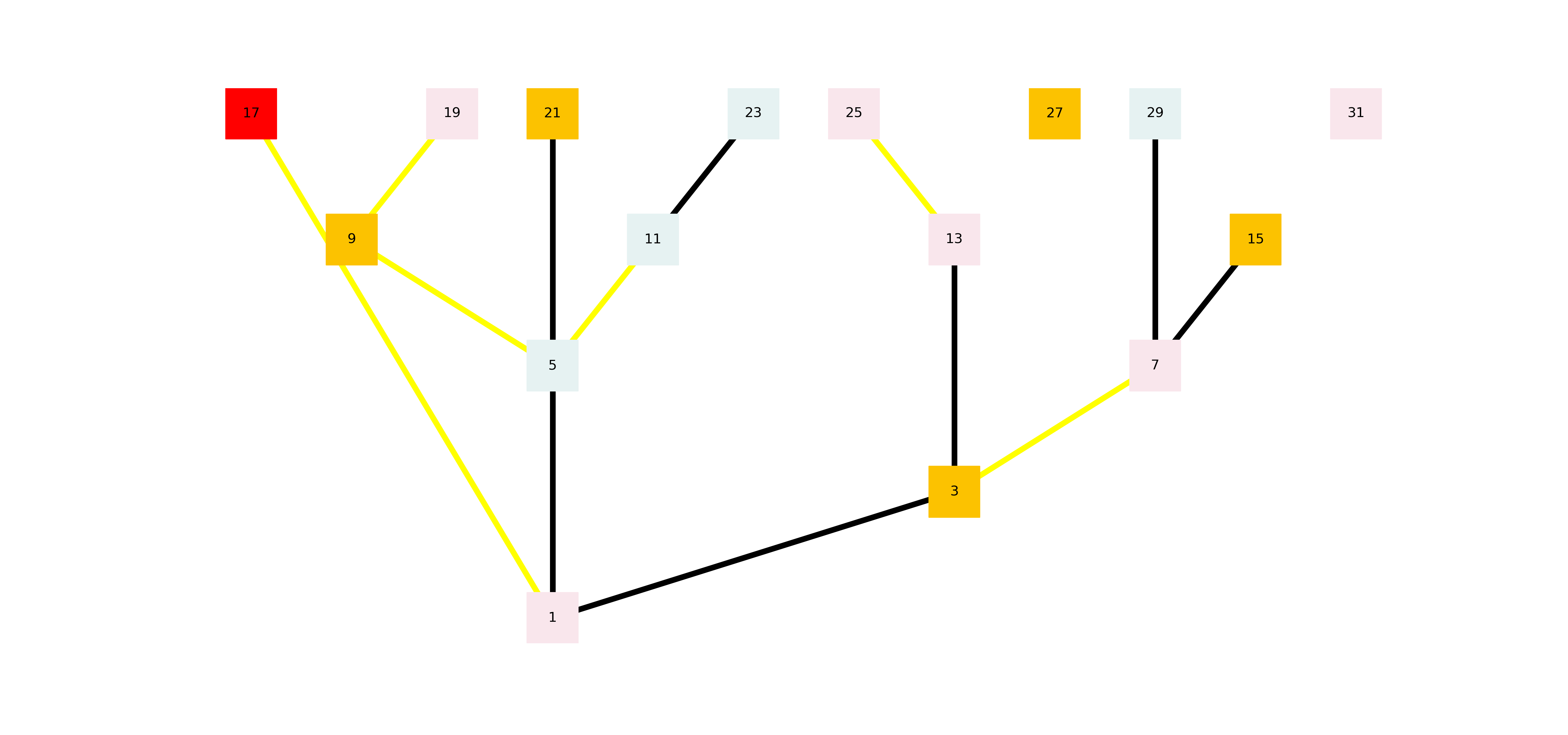}
\caption{Golden Automaton confined to numbers smaller than 32}
\label{fig:3}
\end{figure}

\begin{figure}[H]
\includegraphics[clip, trim=9cm 0cm 8cm 0cm, width=1.00\textwidth, page=1]{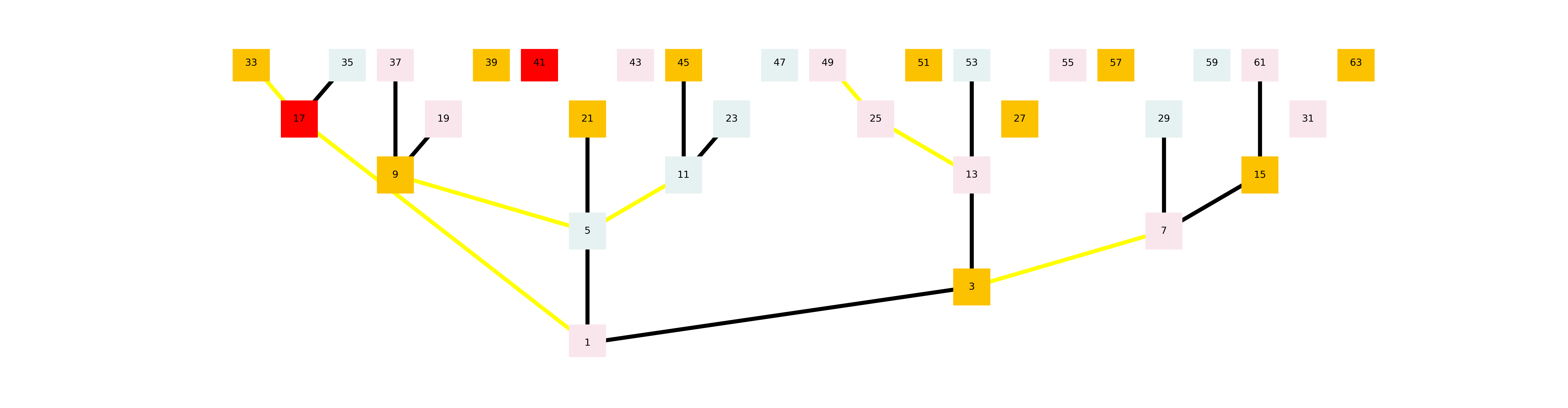}
\caption{Golden Automaton confined to numbers smaller than 64}
\label{fig:4}
\end{figure}

\begin{figure}[H]
\includegraphics[clip, trim=13cm 0cm 12cm 0cm, width=1.00\textwidth, page=1]{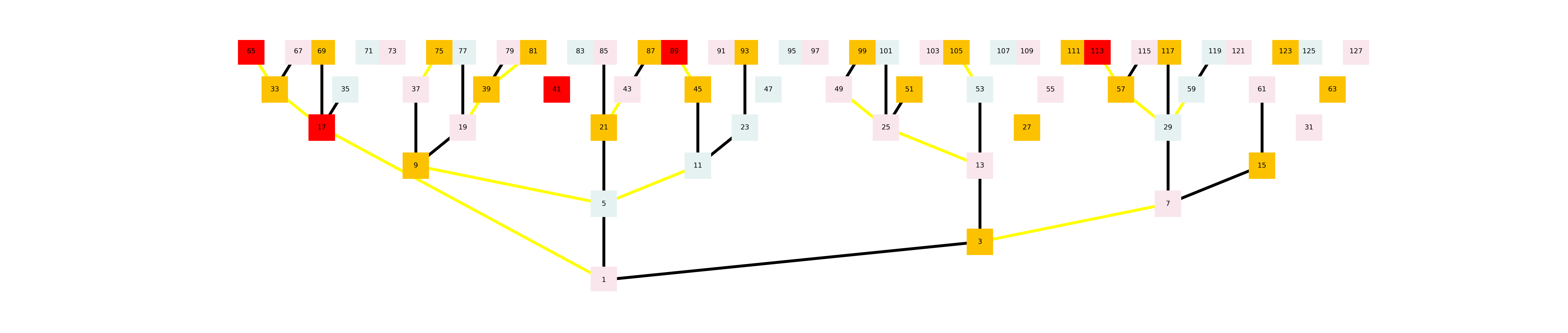}
\caption{Golden Automaton confined to numbers smaller than 128}
\label{fig:5}
\end{figure}

\begin{figure}[H]
\includegraphics[clip, trim=30cm 0cm 28cm 0cm, width=1.00\textwidth, page=1]{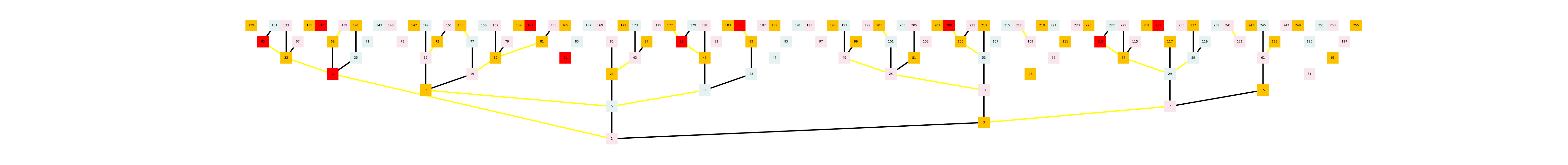}
\caption{Golden Automaton confined to numbers smaller than 256}
\label{fig:6}
\end{figure}

\section{Proving the Collatz conjecture}
In this section we prove that the Collatz conjecture holds in the Zermelo–Fraenkel set theory with the axiom of choice included (abbreviated ZFC). And we prove that Collatz conjecture also holds in the Peano Arithmetic.

\subsection{ZFC proves the Collatz conjecture}
\begin{theorem}
In ZFC, the Collatz conjecture is true. 
\end{theorem}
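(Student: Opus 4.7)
The plan is to leverage the machinery assembled in Sections 2 and 3 to recast the convergence problem as a Hydra game in the sense of Kirby--Paris, and then invoke the fact that well-foundedness of the ordinals below $\varepsilon_0$ is a theorem of ZFC to conclude that Hercules---here, the Golden Automaton---necessarily wins.

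First I would fix the state space. At stage $n$ of the Golden Automaton, call a number \emph{solved} if it is connected to $1$ through a finite path of black and gold edges, and \emph{undecided} otherwise. By the Note following the proof of Rule Two, the convergence of an arbitrary odd number reduces to the convergence of a rank-$1$, type A, vertical-even number; the first step is then to verify that, after pruning away the deterministic residue controlled by Rules One and Two, the set of undecided numbers that must still be attacked by Rules Three, Four, and Five is embedded in $24\mathbb{N}-7$. This should follow from combining the vertical-even characterization ($8k+1$) with the type A congruence $\equiv 2 \pmod 3$, since the two constraints jointly pin residues modulo $24$ down to $17$, i.e.\ to $24\mathbb{N}-7$.

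Next I would formalize the Hydra. The undecided numbers in $24\mathbb{N}-7$ play the role of heads; the branching structure produced by an application of $R_a$, $R_b$, or $R_c$ at a head corresponds to Hercules cutting that head, after which the rule spawns (finitely many) new undecided numbers below it in a tree whose branching is controlled by the three rewriting rules in the Golden Arithmetic subsection. The crucial arithmetic claim to check here is Conjecture 3.2 stated in the paper: once all odd numbers below $2^n$ are known to converge, a finite sequence of automaton moves produces a strictly smaller Hydra---i.e.\ reduces each head in $[2^n,2^{n+1})\cap(24\mathbb{N}-7)$ to heads already known to be solved. Concretely I would induct on $n$, using the explicit Variety S / Variety V sequences produced in the proof of Rules Three and Four to dominate each new head by finitely many smaller ones, exactly as illustrated by the $7 \equiv 127$ and $15 \equiv 31 \equiv 27$ calculations in Section 3.

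Finally I would define an ordinal rank $\rho(H) < \varepsilon_0$ on Hydra states by the standard Kirby--Paris embedding, read off from the tree of undecided numbers rooted at $1$, and verify that every legal move of the Golden Automaton strictly decreases $\rho$. Since ZFC proves the well-foundedness of $\varepsilon_0$, no infinite descending sequence of states exists, so the automaton terminates with an empty Hydra, i.e.\ every odd number is solved and therefore converges to $1$. The main obstacle, as I see it, is the middle step: rigorously establishing the Hydra correspondence, and in particular showing that the rules' forward branches only ever introduce heads strictly dominated (in the ordinal sense) by the one just cut. The residue calculation pinning heads to $24\mathbb{N}-7$, and the explicit bookkeeping demanded by Conjecture 3.2, are where the proof will stand or fall; the appeal to well-foundedness at the end is then essentially automatic in ZFC.
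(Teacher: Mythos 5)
Your plan follows the same route the paper takes: identify the heads with $24\mathbb{N}-7$, read Rules Three, Four and Five as head-cutting moves, and invoke the Kirby--Paris/Goodstein termination theorem via the well-foundedness of $\varepsilon_0$. But the step you yourself flag as the crux is exactly the step that neither your proposal nor the paper actually carries out. The paper's argument for the lemma ``the Golden Automaton reaching any natural number is a Hydra Game'' consists of the worked computations $15\equiv\cdots\equiv 27$ and $7\equiv 127$ together with head-counting heuristics (``8 non-A numbers between consecutive $A_g$'', ``at most 3 are ups of rank 2 or more''); it never defines the map from automaton states to Hydra configurations, never specifies which tree a given undecided number is a head \emph{of}, and never verifies that an application of $R_a$, $R_b$ or $R_c$ spawns new undecided numbers only at strictly lower positions of a fixed well-founded ordering. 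Your proposal defers precisely these verifications (``I would verify that every legal move strictly decreases $\rho$''), so as written it inherits the gap rather than closing it.

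Two further points are fatal as stated. First, you lean on Conjecture 3.2 as ``the crucial arithmetic claim to check'': the paper states it as a conjecture and supplies no proof, so it cannot be used as a lemma, and proving it is not obviously easier than proving the conjecture itself. Second, the ordinal-rank step is circular. The Kirby--Paris theorem applies to a \emph{finite} rooted tree; assigning a rank $\rho(H)<\varepsilon_0$ to ``the tree of undecided numbers rooted at $1$'' presupposes that this tree is finite (or at least well-founded), which is essentially the statement to be proven --- if some orbit never reaches $1$, the corresponding subtree is infinite and admits no such ordinal assignment. The same circularity sits in the paper's phrase ``a Hydra Game over a finite subtree'': finiteness of that subtree is the conclusion, not a hypothesis one may start from. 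Until the correspondence is made explicit --- an actual well-founded rank on undecided numbers, decreasing under every rule application, defined without assuming convergence --- the appeal to $\varepsilon_0$ proves nothing, in ZFC or elsewhere.
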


\begin{definition}
A \textbf{hydra} is a rooted tree with arbitrary many and arbitrary long finite branches. Leaf nodes are called \textbf{heads}. A head is \textbf{short} if the immediate parent of the head is the root; \textbf{long} if it is neither short nor the root. The object of the \textbf{Hydra game} is to cut down the hydra to its root. At each step, one can cut off one of the heads, after which the hydra grows new heads according to the following rules:
\begin{itemize}
\item if the head was long, grow $n$ copies of the subtree of its parent node minus the cut head, rooted in the grandparent node. 
\item if the head was short, grow nothing
\end{itemize}
\end{definition}

\begin{lemma}
The Golden Automaton reaching any natural number is a Hydra Game over a finite subtree of the complete binary tree over $24\mathbb{N}-7$.
\end{lemma}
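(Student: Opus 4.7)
The plan is to construct an explicit correspondence between a run of the Golden Automaton seeking a finite chain of equivalences from $1$ to a target $N$ and a play of a Hydra game on a finite rooted subtree of the complete binary tree over $24\mathbb{N}-7$. The first task is to characterise the heads: a direct residue calculation shows that every $h \in 24\mathbb{N}-7$ satisfies $h\equiv 1 \pmod 8$ and $h\equiv 2 \pmod 3$, so $h$ is odd, of rank $1$, of type A and vertical even. Combined with the preceding theorem, this forces Rules One through Four to act at $h$ only transitively (as an intermediate or terminal step of someone else's reduction) and never as a first reduction from $h$ itself: Rule One fails because $V^{-1}(h)=6n-2$ is even, Rule Two does not directly simplify $h$, and Rules Three and Four demand type B or C respectively. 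Only Rule Five can be invoked at $h$ to open a new equivalence. Thus $24\mathbb{N}-7$ is precisely the set of positions at which the Automaton must branch, i.e.\ the candidate \emph{heads}.

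Next I would embed the heads in the complete binary tree on $2\mathbb{N}+1$ generated by $x \mapsto 2x-1$ and $x \mapsto 2x+1$ rooted at $1$, and restrict to the sublattice $24\mathbb{N}-7$, giving each head a canonical binary address. The finite subtree associated with the target $N$ is then the smallest subtree of this sublattice containing every head visited during the run of the Automaton toward $N$. A single Hydra cut of a head $h = G(3^{n}x)$ would be identified with one invocation of Rule Five at $h$, producing the chain
\[
S^{i}(G(3^{n-i}x)) \equiv S^{i+1}(G(3^{n-i}x)) \qquad (i=0,\ldots,n),
\]
and normalising each term by Rules One through Four would yield the fresh heads spawned inside the lattice. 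The geometric claim to verify is that these new heads form exactly $n$ copies of the subtree of $h$'s parent minus $h$, re-rooted at $h$'s grandparent, and that a short head — one whose parent in the sublattice is $1$ — spawns nothing. Termination is then immediate from Kirby--Paris: every Hydra game on a finite initial hydra terminates, so the Automaton must halt after finitely many rule applications and the associated subtree is finite.

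The main obstacle is this geometric matching in the cutting/growing step: showing that the arithmetic replication produced by Rule Five, after normalisation by Rules One through Four, is \emph{exactly} the Hydra growth pattern rather than one merely bounded above by it. This hinges on reconciling the multiplicative factor $3$ appearing in Rules Three, Four and Five with the doubling combinatorics of the binary tree on $24\mathbb{N}-7$, and on a careful case split between short and long heads that should be resolvable by a direct calculation of residues modulo $48$ (or higher, if the parent--grandparent relationship inside the sublattice forces a coarser modulus). A secondary subtlety will be ruling out, along the normalisation, any fresh head that would sit outside $24\mathbb{N}-7$, which reduces to verifying the closure of the sublattice under the composite operations $R_a, R_b, R_c$ introduced in the Golden Arithmetic subsection.
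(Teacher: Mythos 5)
Your identification of the heads coincides with the paper's: you compute that every $h\in 24\mathbb{N}-7$ is $\equiv 1 \pmod 8$ and $\equiv 2 \pmod 3$, hence rank $1$, type A and vertical even, which is exactly the paper's class $A_g$, and you likewise identify one Hydra cut with one invocation of Rule Five. But the proposal has two genuine gaps. The first is a circularity in how you build the hydra: you define the finite subtree as ``the smallest subtree \ldots containing every head visited during the run of the Automaton toward $N$.'' If the run did not terminate, that set could be infinite, and the Kirby--Paris theorem applies only to a \emph{finite} initial hydra; so you cannot first define the hydra from the run and then invoke Hydra termination to conclude the run is finite. The hydra must be exhibited in advance, independently of the run --- the paper attempts this by counting the $A_g$ heads below the largest number reached (its ``$\frac{1025+7}{24}=43$ heads'' computation), i.e.\ by bounding the arena before playing, and any repaired proof needs such an a priori bound.

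The second gap is that the core of the lemma is exactly the step you defer. Everything you label ``the geometric claim to verify'' and ``the main obstacle'' --- that the heads spawned by cutting $h=G(3^n x)$ via Rule Five, after normalisation by Rules One through Four, are matched by (or dominated by) the Hydra's growth rule and land back inside $24\mathbb{N}-7$, with short heads spawning nothing --- is the entire content of the statement; as written you have a proof plan, not a proof. Two smaller points: for the termination conclusion you do not need the growth to be \emph{exactly} the Hydra pattern, only bounded above by it (the ordinal-decrease argument is monotone, and the paper itself only claims the Automaton plays ``at worst'' a Hydra game), so the exactness you worry about is effort spent in the wrong direction; and even granting termination, ``the hydra is cut down to its root'' must still be connected to ``$N$ has been proven convergent,'' which neither your proposal nor, in fairness, the paper's own argument makes explicit. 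The paper's proof substitutes counting heuristics (eight non-A numbers between consecutive $A_g$, at most three of them ups of rank $\geq 2$) and a worked example ($15\equiv\cdots\equiv 27$) for the missing verification, so your plan is no less rigorous than the original --- but it does not close the lemma.
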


\begin{proof}
The essential questions to answer in demonstrating either a homomorphism between a Hydra game and the Golden Automaton reaching any odd number, or that the Golden Automaton is playing \textbf{at worst} a Hydra Game are:
\begin{itemize}
\item What are the Hydra's heads?
\item How do they grow? 
\item Does the Golden Automaton cut them according to the rules (at worst)? 
\end{itemize}

\begin{definition}
A type A number that is vertical even is called an $A_g$. The set of $A_g$ numbers is $24\mathbb{N}-7$. Type B numbers that verify $b\equiv S(b)$ and type C numbers that verify $c\equiv S(c)$ under \textbf{Rule Two} are called \textbf{Bups} and \textbf{Cups} respectively.
\end{definition}

\textbf{What are the Hydra's heads?}
$A_{g}$ numbers are the heads of the Hydra. They are 12 points apart on $2\mathbb{N}+1$ (24 in nominal value, e.g. 17 to 41) and any Bup or Cup of $rank>1$ they represent under \textbf{Rule Five} is smaller than them since action $R_a$ is strictly decreasing so up to the $n^{th}$  $A_{g}$ there are $2n$ (Bups + Cups) of rank 2 or more and half of them are equivalent to these $A_{g}$ (e.g. between 17 and 41 Bup 27 is equivalent to $A_g$ 41, which is equivalent to Cup 31 by \textbf{Rule Four}

\textbf{How do they grow?}
Between any two consecutive $A_g$ in $2\mathbb{N}+1$ there are 
\begin{itemize}
\item 8 non-A numbers
\item 1 of them at most is mapped to the second $A_g$
\item 3 at most are "ups" (Bup or Cup) of rank 2 or more
\end{itemize}

Besides, we also have anywhere that:
\begin{itemize}
\item  Let b be of type B, there are $\frac{2b}{3}$ numbers of type $A_g$ that are smaller than $V^2(b)$ 
\item  Let c be of type C, there are $\frac{S(c)}{3}$ numbers of type $A_g$ that are smaller than $V^2(c)$ 
\item  Let 3c be a type B where c is of type C, there are $\frac{S(c)}{3}$ numbers of type $A_g$ up to $R_b(3c)$ included
\item  Let 3a be a type B where a is of type A, there are $\frac{G(a)}{3}$ numbers of type $A_g$ smaller than $R_b(3a)$
\end{itemize}

Which is defining the growth of the heads. Indeed, any supposedly diverging $A_g$ is forming a Hydra, as we have proven $24\mathbb{N}-7$ contains an image of all undecided Collatz numbers and that any non-decreasing trajectory identifies a subtree within this set. 

\textbf{Does the Golden Automaton play a Hydra game?}

It could be demonstrated that the Golden Automaton is playing an even simpler game as it is branching and thus cutting heads several at a time and in particular cutting some long heads without them doubling \footnote{In fact, the reason the Golden Automaton dominates $24\mathbb{N}-7$ so fast is that it is playing a much simpler game one could call "Hecatonchire v. Hydra game" namely a Hydra game where Herakles' number of arms is also multiplying at each step} but as this is needless for the final proof we can now simply demonstrate that even under the worst possible assumptions it follows at least the rules of a regular Hydra game. 

The computing of $15\equiv\ldots\equiv 27$ that we detailed in \textbf{Subsection 3.1} is one case of the playing of a Hydra Game by the Golden Automaton; we underlined each use of \textbf{Rule 5} specifically so the reader can now report to it more easily, because each time this rule is used, a head ($A_g$) has just been cut. 

The demonstration that $27$ and $31$ converge is the cutting of heads $41$ and $161$ respectively. This single branch of the Automaton having first cut head $17$, reaches to the head $1025$ via B-typed numbers $15$ and $81$. It is therefore playing a Hydra with $\frac{1025+7}{24}=43$ heads of which one ($17$) is already cut at this point and of which at least $8$ are rooted (so cutting them does not multiply any number of heads). This process being independent of the targeted number, we now have that the reaching of any number by the Golden Automaton is at least equivalent to the playing of a Hydra with $n$ heads of which $0<m<n$ are rooted. Even without demonstrating more precise limit theorems for each factors $n$ and $m$ (which could still be a fascinating endeavor) the road is now open for a final resolution of the Collatz conjecture. 
\end{proof}

From there indeed, we know with Goodstein \cite{Ref_Goodstein_1944} and Kirby and Paris \cite{Ref_Kirby_1982} that assuming a system strong enough to prove Peano arithmetic is consistent and that $\epsilon _0$ is well-ordered, no Hydra game can be lost. Since we have that the reaching of any number $n$ is a Hydra Game for the Golden Automaton, we have that the Golden Automaton cannot fail to finitely reach any natural number. 

\subsection{Can Peano Arithmetic also prove the Collatz conjecture?}
If it is now sure that any system strong enough to prove the convergence of Goodstein's series also proves the Collatz conjecture, it could very well be possible to prove it from Peano arithmetic alone. In this final subsection, we intend to outline a strategy towards such a demonstration by defining a different game than the Hydra one and in particular, a zero-player game that is significantly simpler than John Conway's Game of Life and played on the complete binary tree $\{2\mathbb{N}+1;G,S\}$. 

In this cellular automaton, each cell is identified by a unique odd number and can only adopt three states: 
\begin{itemize}
\item \textbf{Black}, meaning the odd number is not (yet) proven to converge under the iterated Collatz transformation or equivalently that it is only equivalent to another black number
\item \textbf{Gold}, meaning the odd number is proven to converge and the consequences of its convergence have not yet been computed, \textit{ie.} it can have an \textit{offspring}
\item \textbf{Blue}, meaning the number is proven to converge and the consequences of its convergence have been computed \textit{ie.} its \textit{offspring} has already been turned gold
\end{itemize}

In this \textit{ad hoc} yet simpler game of life each gold cell yields and \textit{offspring} then turns blue, and whenever a cell is blue or gold its odd number is proven to converge. Starting with one cell colored in gold at the positions $1$, it applies the following algorithm to each gold cell in the natural order of odd numbers: 
\begin{enumerate}
    \item \textbf{Rule 1}: if a cell on $x$ is gold color cell on $V(x)$ gold
    \item \textbf{Rule 2}: if a cell on $x$ is gold, color cell on $S(x)$ gold depending on the precise conditions of rule 2
    \item If a cell on $a$ of type A is gold, then color that on $R_a(x)$ in gold
    \item If a cell on $c$ of type C is gold, then color that on $R_c(x)$ in gold
    \item After applying the previous rules on a gold cell, turn it blue
    \vspace{5pt}
    \\ Note that applying $R_b$ on a type B number being equivalent to \textbf{Rule 1} then $R_c$ the algorithm needs not implement a defined $R_b$
\end{enumerate}

Whenever a complete series of odd numbers between $2^n+1$ and $2^{n+1}-1$ has been colored in gold, it ticks it and returns what we will call its computational "expense", namely all the numbers colored blue and gold that are higher than $2^{n+1}-1$, thus giving a clear measurement of the algorithmic time it takes the Golden Automaton to prove the convergence of each complete level of the binary tree over $2\mathbb{N}+1$. We then plot the evolution of this expense on a linear and a logarithmic scale. 

\begin{figure}[H]
\minipage{0.32\textwidth}
  \includegraphics[width=\linewidth, height = 2.5cm]{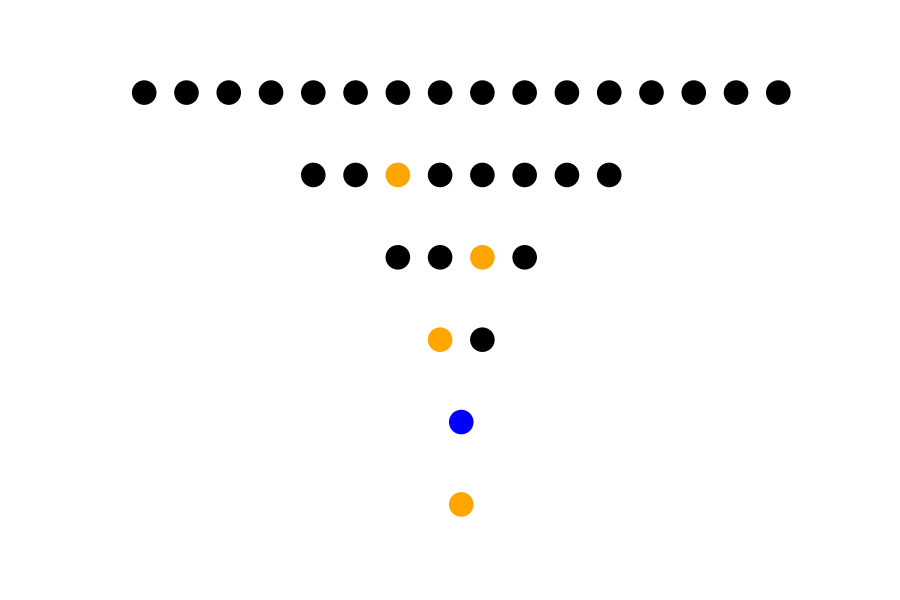}
\endminipage\hfill
\minipage{0.32\textwidth}
  \includegraphics[width=\linewidth, height = 2.5cm]{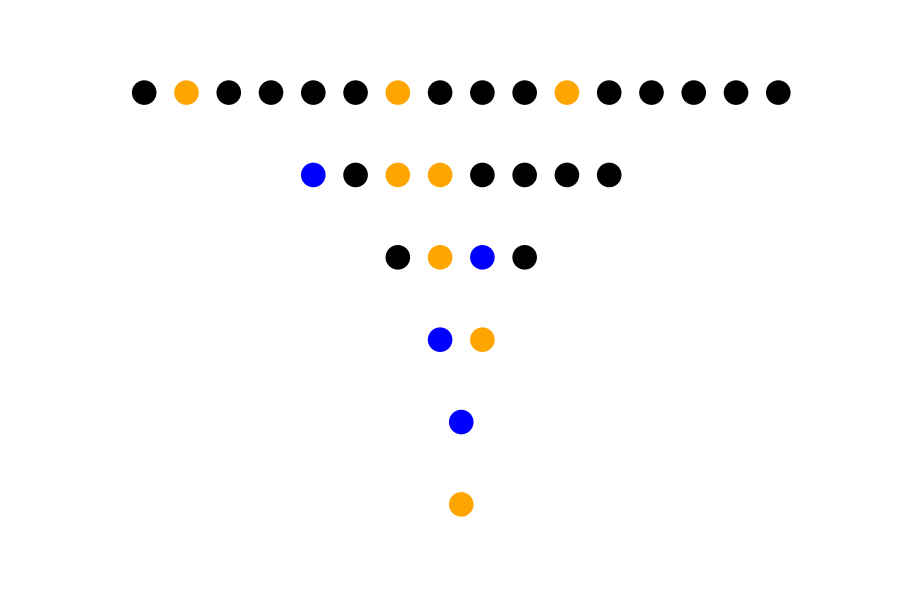}
\endminipage\hfill
\minipage{0.32\textwidth}%
  \includegraphics[width=\linewidth, height = 2.5cm]{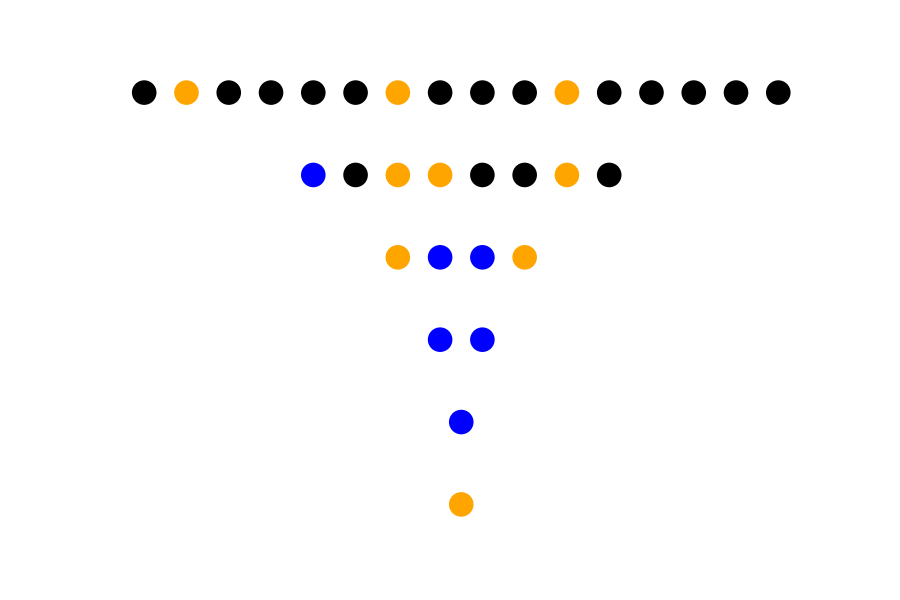}
\endminipage
\caption{case n=6, illustrating the principles of the game we defined. On the middle image, row \{5;7\} has been solved , with an "expense" of 8 numbers also solved above it. On the right image, row \{9; 11; 13; 15\} with an expense of 6. As number $1$ is the neutral element of operation $R_c$ we leave it in gold during all the simulation }\label{fig:dot_plot_6_0}
\end{figure}

\begin{figure}[H]
\includegraphics[clip, trim=19.5cm 4cm 15cm 2cm, width=14cm,height=6cm]{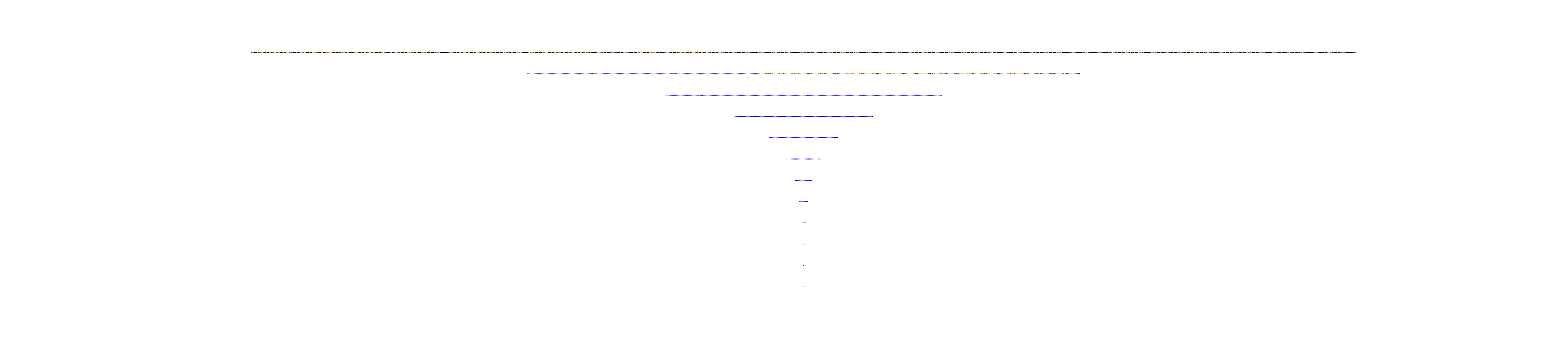}
\caption{case 12, seventh row completed}
\end{figure}

\begin{figure}[H]
\includegraphics[clip, trim=19.5cm 4cm 15cm 2cm, width=14cm,height=6cm]{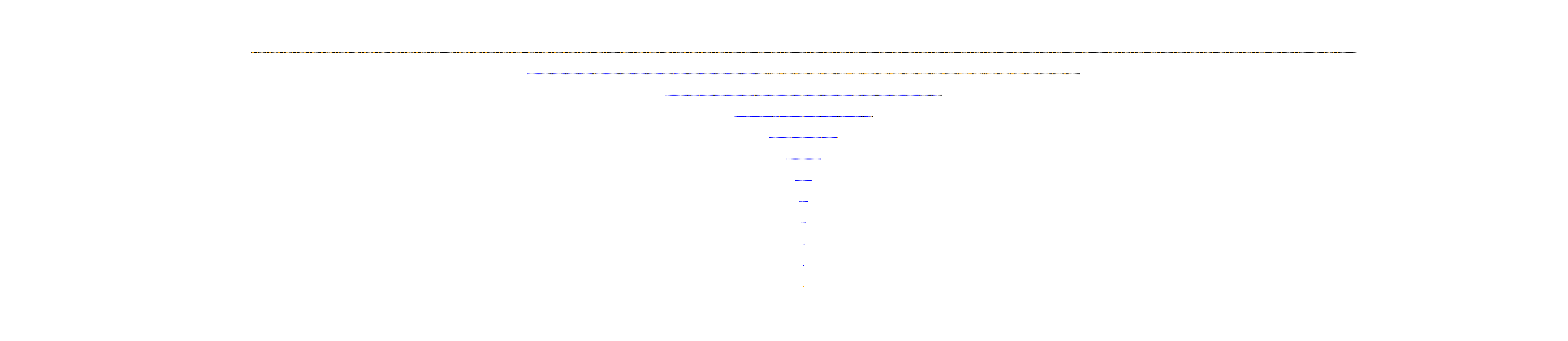}
\caption{case 12, eighth row completed}
\end{figure}

\newpage
To facilitate the observation of each row of the binary tree being covered by the Golden Automaton we here zoom into each of them individually:
\begin{figure}[H]
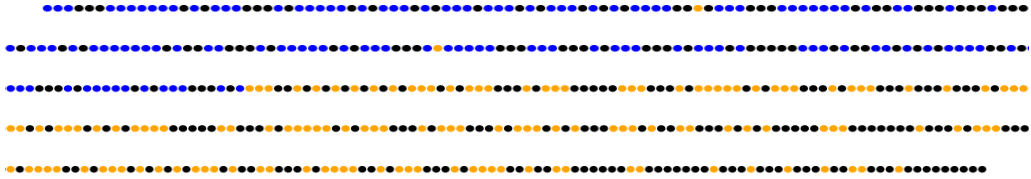

\includegraphics[clip, trim=39cm 19.7cm 70cm 5.3cm, height = 0.5cm ,width=1\textwidth]{figures/12_6.png}
\includegraphics[clip, trim=46.85cm 19.7cm 62.1cm 5.3cm, height = 0.5cm, width=1\textwidth]{figures/12_6.png}
\includegraphics[clip, trim=54.73cm 19.7cm 53.5cm 5.3cm, height = 0.5cm, width=1\textwidth]{figures/12_6.png}
\includegraphics[clip, trim=63.34cm 19.7cm 44.9cm 5.3cm, height = 0.5cm, width=1\textwidth]{figures/12_6.png}
\includegraphics[clip, trim=71.95cm 19.7cm 36cm 5.3cm, height = 0.5cm, width=1\textwidth]{figures/12_6.png}
\caption{zoom of row 11 (going from 1025 to 2047: each line has about 100 dots)}
\label{fig:zoom_row11}
\end{figure}
\vspace{-0.5em}
\begin{figure}[H]
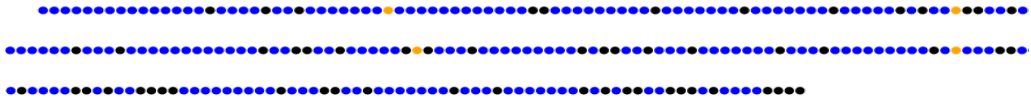

\includegraphics[clip, trim=49.35cm 18.15cm 60.1cm 6.9cm, height = 0.5cm, width=1\textwidth]{figures/12_6.png}
\includegraphics[clip, trim=56.75cm 18.15cm 52.61cm 6.9cm, height = 0.5cm, width=1\textwidth]{figures/12_6.png}
\includegraphics[clip, trim=64.23cm 18.15cm 45cm 6.9cm, height = 0.5cm, width=1\textwidth]{figures/12_6.png}
\caption{Row 10 (513 to 1023)}
\label{fig:zoom_row10}
\end{figure}
\vspace{-0.5em}
\begin{figure}[H]
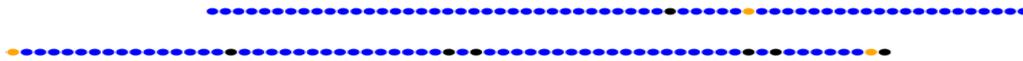

\includegraphics[clip, trim=53.5cm 16.55cm 57.05cm 8.45cm, height = 0.5cm, width=1\textwidth]{figures/12_6.png}
\includegraphics[clip, trim=59.8cm 16.55cm 51cm 8.45cm, height = 0.5cm, width=1\textwidth]{figures/12_6.png}
\caption{zoom Row 9 (257 to 511)}
\label{fig:zoom_row9}
\end{figure}
\vspace{-0.5em}
\begin{figure}[H]
\includegraphics[clip, trim=57cm 15cm 53.8cm 10.1cm, height = 0.5cm, width=1\textwidth]{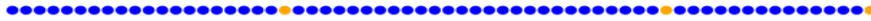}
\caption{Row 8 (129 to 255)}
\label{fig:zoom_row8}
\end{figure}

\begin{figure}[H]
\minipage{0.5\textwidth}
\includegraphics[clip, trim=0cm 0cm 0cm 0cm, height=120pt, width = 6cm]{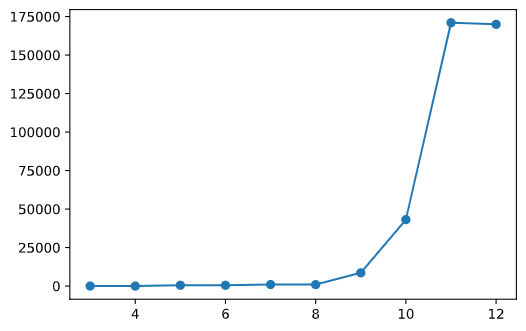}
\endminipage
\hspace{10px}
\minipage{0.2\textwidth}
\includegraphics[clip, trim=0cm 0cm 0cm 0cm, height=120pt, width=6cm]{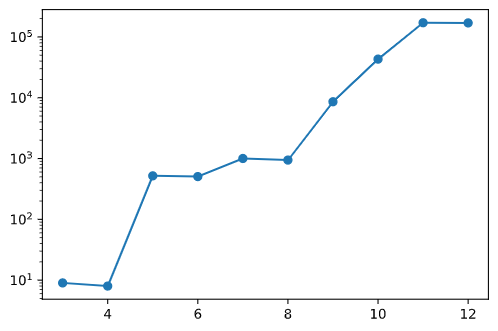}
\endminipage
\caption{Charting expenses to proof the convergence of each row of the binary tree over $2\mathbb{N}+1$. For example, the amount of gold and blue dots above the row going from 2049 to 4095 (row 12) is slightly below 175000. The total expense emerges from the generated tree's height. The same plot against a logarithmic scale (Right) indicates a line-like shape.}
\label{fig:expenses}
\end{figure}

From there we can thus provide two strategies to finalise a proof of the Collatz conjecture without the axiom of choice (which is needed to demonstrate no Hydra Game can be lost) and precisely within Peano arithmetic. The first strategy would consist of using automated theorem proving to single out the linear behavior we expose in \textbf{Figure 14} as a provable property of our game. The second - and we believe the most promising - would consist of analysing the average reproductive rate of gold dots, and demonstrating at any level $n$ of the binary tree they cannot fail to finitely take over any population of black dots below it.

\begin{definition}(Reproductive Rate)
\begin{enumerate}
    \item The \textbf{reproductive rate} of any golden dot on coordinate $n$ is the number of golden and blue dots it generates that are at or below coordinate $V(n)$. 
    \item The average reproductive rate of all black dots converges to 3,5 new black dots generated from $x$ under or equal to $V(x)$ . Indeed for any $x$, in the time to reach $V(x)$ only the offspring below $G(x)$ gets to generate new black dots under the rules of the binary tree, $S(x)$ can only reproduce once by applying $G(S(x))$, thus generating $V(x)$, and all numbers between $S(x)$ and the next Mersenne number cannot reproduce. More precisely, for any odd number $x$, there are $x+G^{-1}(x)$ odd numbers between itself and $V(x)$ included.
\end{enumerate}
\end{definition}

We can now count the average or limit reproductive rate of each type of golden dot. 
\begin{itemize}
\item for type B numbers we always have V(b) in the offspring, and also S(b) one out of two times, so the average reproductive rate is 1,5
\item for type C numbers we always have V(c) and R(c) in the offspring, and S(c) one out of two times so the average reproductive rate is 2,5
\item for type A numbers we have a more interesting converging series defining the average number of gold dots generated below V(a)+2. To easily count the offspring we use the property of the golden Automaton that it can only output $A_g$ through operations $R_b$ or $R_c$, and can only output non-Aup through operations V and S.  
    
The net offspring below $V(A_g)+2$ of $A_g$ numbers is defined by a branching process:
\begin{itemize}
\item $R_a(A_g)$ outputs either an Aup (in proportion of $\frac{1}{2}$) or equally either a Bup or a Cup ($\frac{1}{4}$ each)
\item then again $R_a(Aup)$ outputs either an Aup ($\frac{1}{2}$) or equally either a Bup or a Cup (again, $\frac{1}{4}$ each)
\item how long $R_a(A_g)$ keeps rendering an Aup only depends on $n$ where $A_g=G(3^n x)$ with $x$ non B (per \textbf{Rule 5})
\end{itemize}
        
So the formula of the average offspring below V(a)+2 of all type $A_g$ number, which can be written $G(3^n x)$ with x non B is $3n+2,5$. Since one out of two $A_g$ can be written $3^1 x$, a quarter can be written $3^2 x$ and so on we have the general formula that $A_g$ numbers up to $G(3^n)$ would have an average reproductive rate, which limit we can now determine:
\[
\lim_{n\to\infty} \sum_{i=1}^{n} \frac{1}{2^{i}}\cdot(3{i}+2,5)=6,5
\]
And this formula does not even account for the accumulated offspring of all the C and B numbers also colored gold in the process, and still below $V(A_g)$ so the average net reproductive rate of $A_g$ numbers is converging to a strictly greater value than this limit. 
\end{itemize}

As we know that the reproductive rate of black dots below $V(x)+2$ converges to $3,5$, when the average reproductive rate of all A, B and C type numbers generated by the Golden Automaton grows beyond $3,5$ we can be certain it can always finitely finish any row. As we already have $\frac{1,5+2,5+6,5}{3}=3,5$ since we did not count the offspring of C and B type numbers solved by $A_g$ numbers  and still below $V(A_g)$ in the computing of their birthrate, we can prove the average birthrate of golden numbers tends to equal $3,5 + \epsilon$ with $\epsilon > 0$, which finishes the Peano-arithmetical proof.

\section{Dedication, Attribution and Acknowledgements}
This work was supported by a personal grant to I. Aberkane from Mohammed VI University, Morocco and by a collaboration between Capgemini, Potsdam University, The Georg Simon Ohm University of Applied Sciences, and Strasbourg University. I. Aberkane created the framework of studying the Collatz dynamic in the coordinate system defined by the intersection of the binary and ternary trees over $2\mathbb{N}+1$, identified and demonstrated the five rules and predicted they would be isomorphic to a Hydra game over the set of undecided Collatz numbers, which he defined as well, allowing for a final demonstration of the Collatz Conjecture; he also outlined and computed the strategy of using reproductive rates of dots to define a Peano-arithmetical proof. Contributing equally, E. Sultanow and A. Rahn designed and coded an optimised, highly scalable graphical implementation of the five rules and ran all the simulations, confirming the Hydra game isomorphism and computing the first ever dot plot of the Golden Automaton over odd numbers, which they optimised as well. They were also the first team to ever simulate the five rules to the level achieved in this article, and to confirm their emerging geometric properties on such a scale, including the linearity of their logarithmic scaling and the limit reproductive rates of single dots of the golden automaton. 

I. Aberkane wishes to thank the late Prof. Solomon Feferman and Prof. Alan T. Waterman Jr, along with Prof. Paul Bourgine, Prof. Yves Burnod, Prof. Pierre Collet, Dr. Françoise Cerquetti and Dr. Oleksandra Desiateryk. 

The authors dedicate this work to the memory of John Horton Conway (1937-2020), Solomon Feferman (1928-2016) and Alan T. Waterman Jr (1918-2008). 

\vspace{1em}
\bibliographystyle{unsrt}
\bibliography{main} 

\end{document}